\documentclass[oneside,a4paper,12pt]{article}

\usepackage{geometry} 
\usepackage{amsmath}
\usepackage{amsthm}
\usepackage{amsfonts,amssymb}
\usepackage{fancyhdr} 
\usepackage{mathrsfs}  
\usepackage{graphicx}
\usepackage[all]{xy}
\usepackage{comment}
\usepackage{cases}
\usepackage{color}
\usepackage{placeins}
\usepackage[colorlinks,linkcolor=blue,anchorcolor=blue,citecolor=blue]{hyperref}
\usepackage{xcolor}
\usepackage{graphicx}
\usepackage{tikz}

\usepackage{float} 
\usepackage{subfig}

\newcommand{\R}{\mathbb R}
\newcommand{\T}{\mathcal T}

\renewcommand{\H}{\mathbb{H}}

\newcommand{\Aff}{\operatorname{Aff}}
\newcommand{\Reg}{\operatorname{Reg}}
\newcommand{\conv}{\operatorname{conv}}
\newcommand{\Vol}{\operatorname{Vol}}

\theoremstyle{plain}

\newtheorem{theorem}{Theorem}[section]
\newtheorem{lemma}[theorem]{Lemma}
\newtheorem{proposition}[theorem]{Proposition}
\newtheorem{corollary}[theorem]{Corollary}

\theoremstyle{definition}
\newtheorem{definition}[theorem]{Definition}

\theoremstyle{remark}
\newtheorem{remark}[theorem]{Remark}
\newtheorem{example}[theorem]{Example}

\numberwithin{equation}{section}

\begin{document}

\title{\textbf{Geometric ideal triangulations of hyperbolic $3$-manifolds}}
\author{Huabin Ge}
\date{}
\maketitle

\begin{abstract}
We resolve the longstanding geometric ideal triangulation conjecture arising from Thurston's theory of hyperbolic $3$-manifolds: every finite-volume cusped hyperbolic $3$-manifold admits a finite geometric ideal triangulation realizing its complete hyperbolic structure.
\end{abstract}

\section{Introduction}\label{section:introduction}
A geometric ideal triangulation of a hyperbolic 3-manifold $M$ is an expression of $M$ as a union of geometric ideal tetrahedra glued along their faces. 
Whether every cusped hyperbolic $3$-manifold admits a geometric ideal triangulation is a longstanding problem in $3$-manifold topology, listed as Problem 3.3
in the 2026 Kirby problem list \cite{KirbyK3}. We resolve this problem affirmatively.

\begin{theorem}\label{main}
Every complete finite-volume cusped hyperbolic $3$-manifold $M$ admits a finite geometric ideal triangulation by hyperbolic ideal tetrahedra realizing its complete hyperbolic structure.
\end{theorem}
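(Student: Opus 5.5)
We start from the Epstein--Penner canonical decomposition. Choose horoball cusp neighbourhoods of equal volume, lift them to $\H^3$, and take the convex hull in Minkowski space of the corresponding light-like vectors. Its boundary faces project to a finite decomposition $\mathcal{P}$ of $M$ into convex ideal polyhedra, and every vertex of every polyhedron is a cusp. If every cell of $\mathcal{P}$ is already a tetrahedron we are done. Otherwise we must subdivide each polyhedral cell into ideal tetrahedra using only its own ideal vertices, and do it so that the subdivisions agree across the shared faces. In general the faces are ideal polygons with more than three sides.

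The naive coning method fails. One triangulates each polygonal face by a choice of diagonals and then triangulates each polyhedron compatibly with the induced face triangulations. The trouble is that a convex ideal polyhedron together with arbitrary boundary triangulations need not admit a geometric subdivision without new vertices; the Schönhardt twisted prism is the standard obstruction. Flat (degenerate) tetrahedra are permitted in weaker notions of triangulation but are not allowed here. Our plan therefore has three steps.

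\textbf{Step 1.} Refine by regular subdivisions using a height perturbation. Work in the Epstein--Penner picture, where each cell of $\mathcal{P}$ is a face of the convex hull $C$ of a discrete $\pi_1(M)$-invariant set of light-like points. Perturb the horoball heights, meaning the lengths of the light-like vectors, by a generic $\pi_1(M)$-invariant function on the cusps. The convex hull of the perturbed points induces a regular subdivision of each face, and generically every cell of that subdivision is a simplex. Since the perturbation is defined globally on the cusp set, the induced subdivisions match across shared faces automatically.

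\textbf{Step 2.} Check genericity. There is one height parameter per cusp orbit. The danger is that symmetry forces nongenericity: a cell may have vertices on the same cusp orbit, and then the perturbation restricted to that cell is not generic.

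\textbf{Step 3.} Handle the symmetric case, which I expect to be the main obstacle. The remedy is to pass to a finite cover, or to lift to weights on vertex \emph{lifts} modulo the deck group rather than on cusps. That alone does not yield a triangulation of $M$ itself. I would therefore try instead to use a generic $\pi_1$-invariant choice of \emph{lifting weights} depending on the pair (cell, vertex corner). Concretely, fix a total order on the ideal vertices of the finitely many cells and subdivide each cell by the ``pulling'' (placing) triangulation in that order, done face-compatibly. Pulling triangulations restrict to pulling triangulations of faces, and the order is consistent because it is a global order on corners modulo identification. Self-gluings of a cell to itself must be checked separately, possibly after subdividing via a finite cover and averaging.

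Finally, verify that the resulting tetrahedra are nondegenerate. This holds because pulling triangulations of convex polytopes consist of full-dimensional simplices. Also verify that their shapes satisfy Thurston's gluing and completeness equations. This is automatic, since each tetrahedron is a geometric subset of the genuine complete structure, so the result is a geometric ideal triangulation realizing the complete structure.
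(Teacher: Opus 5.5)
\textbf{There is a genuine gap.} It is in Step 3, and it is the whole difficulty of the problem.

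\textbf{Why the pulling-order construction does not work.} A pulling triangulation restricts on a polygonal face $F$ to the fan from the first vertex of $F$ in the induced order. For $n_\alpha\ge 5$, two such fans coincide only if they have the same apex. So compatibility across a pairing $\phi_\alpha:F^-_\alpha\to F^+_\alpha$ forces $\phi_\alpha(\min F^-_\alpha)=\min F^+_\alpha$, with a weaker condition for quadrilaterals. There are two ways to read your order, and neither works.
\begin{itemize}
\item If your order is on corners modulo identification, i.e.\ on cusps, it does not totally order the vertices of a cell having several corners at the same cusp. That is precisely the symmetric case you flagged in Step 2, so the pulling triangulation is not even defined there.
\item If it is a total order on the actual corners $(P_i,v)$, the face pairings do not preserve it. The conditions above then form a system of order constraints that you never show to be solvable. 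For example, if the first vertex $v$ of a cell lies on two faces of that cell paired with each other by $\phi$, you need $\phi(v)=v$, which fails whenever the pairing moves $v$. Nothing prevents such constraints from chaining into cycles.
\end{itemize}
Appealing to ``a finite cover and averaging'' does not repair this. A cover only yields the Luo--Schleimer--Tillmann virtual result, and triangulations cannot be averaged. So face-compatibility is asserted, not proved.

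\textbf{The missing idea.} The paper drops orders and genericity altogether.
\begin{itemize}
\item Lift each Epstein--Penner cell to its spacelike supporting hyperplane in $\R^{3,1}$, where the face pairings are affine.
\item Give every corner of every cell its own height. Require only that on each paired face the two height functions agree modulo an affine function on that face.
\item This is a linear condition. A pair of $n_\alpha$-gons imposes $n_\alpha-3$ equations, and a cell contributes $|V(P_i)|-4$ unknowns. Euler's formula then gives at least $\frac12\sum_i(f_i-4)$ dimensions of compatible height classes, which is positive as long as some cell is not a tetrahedron.
\item A nonzero class gives a nontrivial regular subdivision. It matches on paired faces by the restriction property of regular subdivisions.
\item Iterating terminates, because all cells are spanned by original vertices.
\end{itemize}
Your idea of weights indexed by (cell, corner) points in this direction. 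Without the relaxation ``modulo affine functions on each face'' and the Euler-characteristic count, though, there is no reason compatible weights exist. You also do not need a simplicial subdivision in one step.

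Your closing remarks on nondegeneracy and on the tetrahedra being geometric pieces of the complete structure are fine. The paper checks nondegeneracy after radial projection via linear independence of the four light-like vectors lying in the hyperplane $\lambda=1$.
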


When $M$ is orientable, one may choose vertex orderings consistent with the global orientation so that all shape parameters lie in the upper half-plane. We prove the main theorem starting with the classical Epstein-Penner decomposition \cite{EP}, that is, after a choice of cusp decoration, every such manifold admits an Epstein--Penner decomposition into finitely many ideal hyperbolic polyhedra. Together with the following subdivision compatibility theorem, we obtain the desired conclusion.

\begin{theorem}[Compatible tetrahedralization]
\label{main-2}
Let $\mathcal P$ be a finite system of three-dimensional convex polyhedra whose two-dimensional faces are paired by affine homeomorphisms. Then the polyhedra admit tetrahedral subdivisions, using only their original vertices, such that the subdivisions agree under every face pairing.
\end{theorem}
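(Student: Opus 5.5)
The plan is to use \emph{pulling} (placing) triangulations, which are determined by combinatorial order data and therefore lend themselves to compatibility. Recall that for a convex polytope $Q$ and a total order on its vertices, the pulling triangulation is defined recursively. One cones from the least vertex $v$ of $Q$ over the pulling triangulations of all proper faces of $Q$ not containing $v$, each face inheriting the restricted order. For a convex polygon this is simply the fan from its least vertex. For a convex polyhedron it is a genuine tetrahedral subdivision using only original vertices, and its restriction to any $2$-face $F$ is the pulling triangulation of $F$ for the restricted order. This last fact is the key point: faces containing $v$ get the fan from $v$, and faces not containing $v$ are triangulated by the recursion. Consequently, the restriction to $F$ depends only on which vertex of $F$ is least, i.e.\ on the apex of the fan.

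The argument then reduces to choosing the orders coherently. First, let $V$ be the set of all vertices of all polyhedra in $\mathcal P$, and let $\sim$ be the equivalence relation generated by the face pairings. Second, choose a total order on $V/{\sim}$ and pull back to each polyhedron. Suppose a face pairing $\phi\colon F\to F'$ is an affine homeomorphism preserving vertex classes. If no face contains two vertices of the same class, the least vertex of $F$ is sent by $\phi$ to the least vertex of $F'$. Then the two fans coincide under $\phi$, since an affine map sends the fan of a convex polygon from $w$ to the fan from $\phi(w)$. The ties among vertices of a polyhedron that are not on a common face do not matter for compatibility; they can be broken arbitrarily inside each polyhedron, because only the restriction to faces is ever compared.

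The main obstacle is the degenerate situation relevant to cusped manifolds, where several vertices of one face lie in the same class (e.g.\ one-cusped manifolds, where all ideal vertices are identified). There the order on classes does not single out a vertex of $F$. The plan is to replace ``least vertex'' by a choice of apex $a(F)$ for each face, made equivariantly on each orbit of face pairings. Since pairings pair faces two at a time, this is always possible: pick $a(F)$ arbitrarily and set $a(F')=\phi(a(F))$. The difficulty then shifts to the polyhedron. Arbitrary fan choices on the boundary need not extend without new vertices, as shown by the twisted triangular prism of Schönhardt type. To overcome this I would use a \emph{local} order on each polyhedron $P$ that makes $a(F)$ the least vertex of $F$ for every face $F$ of $P$. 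Such an order exists exactly when the relation ``$a(F)<w$ for $w\in F\setminus\{a(F)\}$'' is acyclic on the vertices of $P$. Given such an order, the pulling triangulation of $P$ restricts to the prescribed fan on every face, and compatibility across pairings follows.

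So the crux is to choose the apexes so that these relations are acyclic on every polyhedron simultaneously. I would attempt this by a global greedy procedure. Process vertex occurrences in some order. At each step declare the next vertex minimal in its polyhedron and make it the apex of every still-undecided face containing it. Then propagate each new choice through its face pairing to the partner face, and verify that a propagated choice never contradicts an already-declared minimum in the partner polyhedron. Showing that this propagation cannot create a cycle is the step I expect to be hardest, and it is where I would concentrate the effort, possibly by exploiting the structure of face pairings (each face paired exactly once) to run an inductive argument on the number of undecided faces.
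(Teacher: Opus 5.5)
Your reduction is correct as far as it goes. A pulling triangulation restricts to the fan from the least vertex on each face, so compatibility amounts to choosing face apexes equivariantly such that the relation ``$a(F)$ precedes the other vertices of $F$'' is acyclic on each polyhedron. The step you defer, however, is false in general, so no choice of apexes, greedy or otherwise, can rescue the method.

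Take the regular dodecahedron and pair each face $G$ with $-G$ by $\phi_G(x)=-\rho_G(x)$. Here $\rho_G$ is the rotation of $G$ by $2\pi/5$, positive with respect to the outward normal. These maps are affine, and $\phi_{-G}=\phi_G^{-1}$. Up to mirror image this is the Seifert--Weber ($3/10$-twist) gluing. All $20$ vertices fall into one class, so this is exactly your degenerate case.

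For pentagons a fan determines its apex, so compatibility forces $\min(-G)=\phi_G(\min G)$ for every face $G$. Let $v$ be the least vertex, let $F_1,F_2,F_3$ be its faces in cyclic order, and let $u_{ij}$ be the neighbour of $v$ on $F_i\cap F_j$. Then $\min(-F_k)=\phi_k(v)$. Because the twist has the same sense on every face, $(\phi_1(v),\phi_2(v),\phi_3(v))$ equals $(-u_{31},-u_{12},-u_{23})$ or $(-u_{12},-u_{23},-u_{31})$, depending on conventions. Since $-u_{ij}\in(-F_i)\cap(-F_j)$, requiring $\min(-F_k)$ to precede the other vertices of $-F_k$ produces a directed $3$-cycle on $\{-u_{12},-u_{23},-u_{31}\}$. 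As $v$ was arbitrary, no vertex order gives a compatible pulling triangulation. Your greedy procedure breaks at its very first propagation step, even though the theorem does hold for this system.

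The missing idea is to use linear rather than order-theoretic data. The paper works with regular subdivisions $\Reg(P_i,h_i)$ for arbitrary real heights. By the restriction lemma, the subdivision induced on a face depends only on $h_i|_{V(F)}$ modulo affine functions. Compatibility across $\phi_\alpha$ is therefore the linear condition $[h^-_\alpha-h^+_\alpha\circ\phi_\alpha]=0$ in $\R^{V(F^-_\alpha)}/\Aff(F^-_\alpha)$. This gives $n_\alpha-3$ equations per face pair against $\sum_i(|V(P_i)|-4)$ unknowns.

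Euler's formula shows the solution space has dimension at least $\frac12\sum_i(f_i-4)$, which is positive as soon as some cell is not a tetrahedron. A nonzero solution yields a nontrivial, possibly coarse, compatible subdivision. Its cells form a new admissible system, with the induced pairings on boundary faces and identity pairings on new interior faces. Iteration terminates because the number of cells in $P_i$ is bounded by the number of affinely independent $4$-subsets of $V(P_i)$. No one-shot triangulation is needed, and no control over which face triangulations appear; existence comes purely from the dimension count.
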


For manifolds with totally geodesic boundary, we use the polyhedral decompositions of Kojima \cite{Kojima} and Ge-Jia-Zhang \cite{GJZdecomp}, together with
the affine-fellow correspondence of Luo-Schleimer-Tillmann \cite{LST}. This yields two further results.

\begin{theorem}\label{main:boundary}
Let $N$ be an orientable, complete, finite-volume hyperbolic $3$-manifold with nonempty compact totally geodesic boundary and possibly toric cusps. Then $N$ admits a finite good geometric partially truncated triangulation.
\end{theorem}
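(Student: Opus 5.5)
The plan follows the cusped case: start from a canonical polyhedral decomposition of $N$, then subdivide it compatibly using Theorem~\ref{main-2}. First I would invoke the Kojima decomposition \cite{Kojima}, or its extension by Ge--Jia--Zhang \cite{GJZdecomp} when toric cusps are present. This expresses $N$ as a finite union of \emph{partially truncated} hyperbolic polyhedra. Each one is the truncation of a convex polyhedron in projective space whose vertices are either ideal (cusps), or hyperideal, lying beyond $\partial\H^3$ and truncated by their polar planes, which give the totally geodesic boundary. The face pairings are hyperbolic isometries that preserve the ideal and hyperideal vertex types.

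The second step moves to projective/affine coordinates via the Klein model, using the affine-fellow correspondence of Luo--Schleimer--Tillmann \cite{LST}. Each untruncated polyhedron becomes a genuine convex Euclidean polyhedron in $\R^3$ (after a projective normalization). Its vertices are the ideal and hyperideal points, and the face pairings become projective, hence on each face affine up to the correspondence, homeomorphisms. The point of \cite{LST} is that "face pairing up to isometry" can be replaced by affine face pairings between fellow polyhedra without changing combinatorics, so that Theorem~\ref{main-2} applies. Applying it gives tetrahedral subdivisions using only the original vertices, agreeing across every face pairing. Every resulting tetrahedron therefore has all four vertices ideal or hyperideal.

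The third step checks that each such tetrahedron, once truncated at its hyperideal vertices, is a good geometric partially truncated tetrahedron, and that the pieces reassemble to $N$. The condition I expect to be the main obstacle is the following: for each edge joining two hyperideal vertices, the line must actually meet $\H^3$, so that the two truncation planes do not intersect inside the tetrahedron. An edge of a new tetrahedron that is a diagonal of an original face, or an interior diagonal of an original polyhedron, need not obviously satisfy this.

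I would first try convexity. The original polyhedron is the convex hull of its vertices, and in the Kojima construction every vertex's polar plane separates it from all the others within the polyhedron, with the truncated polyhedron having nonempty interior in $\H^3$. For two hyperideal vertices $v,w$, the segment $[v,w]$ lies in the polyhedron. If it missed $\overline{\H^3}$, then $v$ and $w$ would lie on the same side of each other's polar planes, meaning the polar planes intersect outside $\overline{\H^3}$. I would try to contradict the separation property using the fact that the polyhedron is the convex hull of vertices whose polar half-spaces cover the polyhedron's complement of $\H^3$, and that the Kojima faces are dual to a convex-hull construction in Minkowski space. If this direct argument fails, the fallback is to use the freedom in Theorem~\ref{main-2} to choose diagonals adjacent to ideal vertices where possible, and to use cone-from-a-vertex subdivisions, which reduces the check to edges already present in the original decomposition. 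Finally, nondegeneracy (positive volume) follows because the subdivision uses no new vertices and each tetrahedron lies in a convex polyhedron. Orientability then lets one order vertices consistently.
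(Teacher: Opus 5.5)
Your overall plan matches the paper's: Kojima cells, affine fellows, and compatible subdivisions without new vertices. The execution has three gaps.

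\textbf{Projective versus affine pairings.} This is the central gap, in your second step. Theorem~\ref{main-2} requires affine face pairings. The maps between lateral faces of the affine fellows are projective, $\phi(x)=(Ax+b)/\lambda(x)$ with $\lambda>0$. Luo--Schleimer--Tillmann give no way to replace them by affine maps, and treating them as affine genuinely fails.
\begin{itemize}
\item The map $h\mapsto h\circ\phi$ does not send affine functions to affine functions. So the map $B$ of Lemma~\ref{lemma:surplus} is not even well defined on height classes.
\item The condition $h^-\sim h^+\circ\phi$ does not force matching subdivisions. Suppose a quadrilateral face has affine dependence $\sum_k c_kv_k=0$, $\sum_k c_k=0$. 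Its diagonal is decided by the sign of $\sum_k c_kh(v_k)$. The image face has dependence $\sum_k c_k\lambda(v_k)\phi(v_k)=0$. So, with the same heights placed at the image vertices, its diagonal is decided by the sign of $\sum_k c_k\lambda(v_k)h(v_k)$.
\item Concretely, take the unit square, $\phi(x,y)=(x,y)/(1+3y)$, and heights $1,0,0,\tfrac12$ at $(0,0),(1,0),(1,1),(0,1)$. The square is cut along the diagonal from $(1,0)$ to $(0,1)$. The image trapezoid, carrying the same heights at the image vertices, is cut along the image of the other diagonal.
\end{itemize}
The paper repairs this with the transport $(T_\phi h)(\phi(v))=h(v)/\lambda(v)$ of Lemma~\ref{lemma:projective-transport}. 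It preserves affine functions and carries $\Reg(F,h)$ to $\Reg(F',T_\phi h)$, so the dimension count and the iteration go through unchanged (Theorem~\ref{theorem:projective-triangulation}). Alternatively, one could try to realize the cells as faces of a convex hull in $\R^{3,1}$, where deck transformations act linearly, as in Section~\ref{section:proof-main}. Your proposal does neither.

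\textbf{The geometric check.} You identify as the main obstacle that each straight tetrahedron of a fellow truncates to a genuine geodesic partially truncated tetrahedron, and that these pieces reassemble to $N$. This is only sketched (``I would try''). In the paper, this is exactly what Luo--Schleimer--Tillmann is used for (Theorem~\ref{theorem:LST-fellow}), not an affine replacement of the pairings. Your fallback does not work:
\begin{itemize}
\item Theorem~\ref{main-2} gives you no way to prescribe which diagonals appear, since the heights are taken from the kernel of a linear map.
\item Coning from a vertex $v$ creates new edges from $v$ to every non-adjacent vertex, so it does not reduce the check to original edges.
\item Independently chosen coned subdivisions generally do not match across paired faces, which is the whole difficulty.
\item When $N$ has no cusps, there are no ideal vertices to prefer.
\end{itemize}

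\textbf{The meaning of ``good''.} In the paper, \emph{good} means that each tetrahedron has at most one ideal vertex. You seem to read it as the truncation condition, and your proposal never establishes the paper's meaning. The paper obtains it by choosing Kojima cells that each have at most one ideal vertex. Since the subdivision adds no vertices, no tetrahedron can then contain two ideal vertices.
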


Here \emph{good} means that each tetrahedron
has at most one ideal vertex. In particular, this proves Frigerio's Conjecture $1.2$ in \cite{Frigerio}.

\begin{theorem}\label{main:mixed}
Let $M$ be an orientable, finite-volume, noncompact, complete hyperbolic $3$-manifold with totally geodesic boundary in the setting of \cite{GJZdecomp}. Then $M$ admits a finite geometric partially truncated triangulation such that every tetrahedron is either ideal or has exactly one hyperideal vertex. No flat tetrahedron is introduced, and the triangulation realizes the given complete hyperbolic structure.
\end{theorem}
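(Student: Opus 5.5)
The plan is to start from a polyhedral decomposition of $M$ that contains both ideal and hyperideal vertices, and to subdivide it in two stages. The decomposition is the one of Ge--Jia--Zhang \cite{GJZdecomp}, a generalization of Kojima's \cite{Kojima}. In it each cell is a convex hyperbolic polyhedron whose vertices are ideal (coming from cusps) or hyperideal (whose polar planes give the totally geodesic boundary). After truncating the hyperideal vertices, the cells glue along faces by isometries to recover $M$.

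Through the projective (Klein) model, each cell corresponds to a convex Euclidean polyhedron in $\R^3$. The face pairings are restrictions of hyperbolic isometries, which act projectively, so the pairings are projective rather than affine. This is where the affine-fellow correspondence of Luo--Schleimer--Tillmann \cite{LST} comes in. It replaces each cell by an affinely equivalent convex polyhedron, with the same face lattice, on which the pairings become affine. This puts the configuration in the setting of Theorem~\ref{main-2}.

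Stage one applies Theorem~\ref{main-2} to get tetrahedral subdivisions using only original vertices and compatible with all face pairings. The combinatorial triangulation is then transported back to the hyperbolic cells. Since no new vertices are introduced and the subdivision is of a convex polyhedron, each tetrahedron is spanned by four affinely independent vertices of the cell. It is therefore a nondegenerate geometric tetrahedron with ideal or hyperideal vertices, so no flat tetrahedron appears. One point to check is that an edge joining two hyperideal vertices passes through $\H^3$, so that truncation is legitimate. This should follow because the edge lies in a convex cell whose truncation is a compact-plus-cusp region.

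Stage two enforces the condition that each tetrahedron is either ideal or has exactly one hyperideal vertex. A tetrahedron $T$ with $k\ge 2$ hyperideal vertices must be re-subdivided, and the tool is coning. Each hyperideal vertex $v$ corresponds to a boundary component, and one could cone from $v$ only if the link of $v$ in $T$ avoids the other hyperideal vertices. That fails whenever an edge joins two hyperideal vertices. So the plan is instead to refine the stage-one subdivision using ideal vertices as cone points, choosing, for each cell, the triangulation so that every tetrahedron contains at most one hyperideal vertex.

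Concretely, I would first subdivide each face containing several hyperideal vertices by coning from an ideal vertex of that face, chosen consistently across face pairings by a global ordering of ideal vertices. I would then cone each cell from its ideal vertex of least order. This is the pulling-triangulation trick behind Theorem~\ref{main-2}, but with the order chosen so that ideal vertices precede all hyperideal ones.

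The main obstacle is that a face, or a whole cell, may have two hyperideal vertices and no ideal vertex. Coning then necessarily produces tetrahedra with two hyperideal vertices. The hypothesis ``in the setting of \cite{GJZdecomp}'' must be what rules this out. My first attempt would be to show that in that decomposition every cell, and every face, has at least one ideal vertex. I would argue this via the construction in \cite{GJZdecomp}: cells are dual to the ideal-cusp part of the convex hull, so every hyperideal vertex is adjacent only to faces containing a cusp. Given this, a pulling triangulation with ideal vertices pulled first yields tetrahedra, each containing an ideal cone vertex. The remaining triangle of such a tetrahedron lies in a face, and that face was already coned from an ideal vertex. So at most one vertex of each tetrahedron is hyperideal, and induction on dimension (edges, then faces, then cells) closes the argument, giving Theorem~\ref{main:mixed}.
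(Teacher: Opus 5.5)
\paragraph{Main gap (Stage two).} The gap is in your Stage two. It comes from not using the one property of the Ge--Jia--Zhang decomposition that the proof needs: every cell is either ideal or has exactly one hyperideal vertex. With that property nothing remains after Stage one. A tetrahedron spanned by vertices of an ideal cell is ideal, and one spanned by vertices of a cell with a single hyperideal vertex either avoids that vertex or is of type $1$--$3$.

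The property you substitute, that every cell and every face has an ideal vertex, does not imply the conclusion. Suppose a cell has an edge $ab$ with $a,b$ both hyperideal. Then every tetrahedral subdivision without new vertices contains a tetrahedron with edge $ab$. In your own construction, the cone $\{v,w,a,b\}$ from ideal $v,w$ over such an edge of a face has two hyperideal vertices, so your final ``at most one'' claim fails.

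Independently, the pulling scheme with ``a global ordering of ideal vertices'' is the naive approach whose failure is the whole difficulty. Vertices of a cell are typically identified in the quotient; with one cusp, all ideal vertices are. So an order on quotient vertices does not order a cell. An order on lifted vertices need not be respected by the face pairings: the fans from the first vertices of paired faces must correspond under $\phi$, and nothing forces this. This incompatibility is what led Petronio--Weeks to allow flat tetrahedra, and it is exactly what the paper's compatible height classes avoid. Note also that a tetrahedral subdivision cannot be refined without new vertices, so Stage two would really discard Stage one.

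\paragraph{Affine versus projective pairings (Stage one).} Stage one misstates the affine-fellow correspondence. The fellows live in the Klein model, where hyperbolic isometries act projectively. The induced maps between lateral faces of the fellows are therefore projective, not affine, and Theorem~\ref{main-2} does not apply as stated. The paper uses the projective analogue, Theorem~\ref{theorem:projective-triangulation}. The transport $(T_\phi h)(\phi(v))=h(v)/\lambda(v)$ of Lemma~\ref{lemma:projective-transport}, with $\lambda>0$ on the closed face, preserves affine classes and carries regular subdivisions to regular subdivisions. Hence each $n$-gonal pair still imposes $n-3$ conditions, and the Euler count of Lemma~\ref{lemma:surplus} goes through. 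Combining that theorem with Theorem~\ref{theorem:LST-fellow} and the at-most-one-hyperideal-vertex property of the cells proves the statement directly, with no Stage two.
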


\subsection*{Background and previous work}
The existence of geometric ideal triangulations is a longstanding problem arising from Thurston's theory of hyperbolic $3$-manifolds \cite{Thurston}. Thurston's original proof of
hyperbolic Dehn filling implicitly assumed their existence, an assumption later removed by Petronio-Porti \cite{PP}. For this reason, the existence problem is sometimes referred to as Thurston's geometric ideal triangulation conjecture, or simply Thurston's triangulation conjecture.

Epstein-Penner \cite{EP} showed that every cusped hyperbolic 3-manifold admits a decomposition into convex ideal polyhedra. Each such polyhedron can be subdivided into
ideal tetrahedra. The difficulty is to make the subdivisions compatible across paired faces; see \cite{KirbyK3,FHH}. Since then, several important partial results have been obtained. Yoshida \cite{Yoshida} proved the conjecture for manifolds obtained from two convex ideal polyhedra with all faces paired between them. Wada-Yamashita-Yoshida \cite{WYY} and Sirotkina \cite{Sirotkina} treated further special configurations. Petronio-Weeks \cite{PW} showed that the Epstein-Penner decomposition can be subdivided into ideal tetrahedra if flat tetrahedra are allowed to resolve incompatible face triangulations. See Petronio \cite{Petronio} for a survey of partially flat ideal triangulations and related existence problems, and see \cite{Gue1,Lackenby,Nim} for some explicit geometric ideal triangulations of special classes of hyperbolic 3-manifolds.

Luo-Schleimer-Tillmann \cite{LST} proved that every cusped hyperbolic $3$-manifold admits a finite cover with a geometric ideal triangulation. Futer-Hamilton-Hoffman \cite{FHH} strengthened this virtual existence result by constructing infinitely many geometric ideal triangulations in a suitable finite cover.

A complementary approach uses combinatorial Ricci flow. Costantino-Frigerio-Martelli-Petronio \cite{CFMP} proved hyperbolicity under the edge-valence bound $6$ and conjectured that the prescribed triangulation can be geometrically realized by hyperbolic partially truncated tetrahedra. Using combinatorial Ricci flow, Feng-Ge-Hua \cite{FGH} proved this under the stronger bound $10$, thereby establishing both hyperbolization and a geometric triangulation of the prescribed ideal triangulation.
Zhao \cite{Zhao} improved the bound to $9$. More generally, Feng-Ge \cite{FengGe} showed, in their setting, that the extended combinatorial Ricci flow converges if and only if the prescribed triangulation is geometric. Related results on combinatorial Ricci flows can be found in \cite{FGHX,FGM}.

Other developments include the existence of ideal triangulations carrying strict angle structures under a homological hypothesis,
established by Hodgson-Rubinstein-Segerman \cite{HRS} (see also \cite{GJZangle} for a related generalization), and explicit geometric triangulations for important families of hyperbolic link
complements constructed by Ham-Purcell \cite{HP}. Choi \cite{Choi} also exhibited an incomplete hyperbolic structure that does not admit a
geometric ideal triangulation, highlighting the importance of the completeness assumption.

\subsection*{Organization of the paper}
Section \ref{section:preliminaries} recalls the Klein model, the Epstein-Penner decomposition and regular subdivisions. Section \ref{section:compatible} proves the compatible triangulation theorem for affine face pairings. Section \ref{section:proof-main} applies it to the Epstein-Penner decomposition. Section \ref{section:projective} treats projective face pairings and manifolds with geodesic boundary. Section \ref{section:consequences} discusses the applications above.

~

\noindent
\textbf{Acknowledgements:}
The author is grateful to Longsong Jia and Rongyuan Du for helpful discussions, preparing the figures and careful checking of details. This work is supported by NSFC under Grants Nos. 12341102, 12122119, and 12525103. The author used ChatGPT (OpenAI) to assist with English language editing. All mathematical ideas, results, arguments, and proofs are the author's own. The authors have carefully reviewed the entire manuscript and assume full intellectual responsibility for its content.


\section{Hyperbolic polyhedra and regular subdivisions}\label{section:preliminaries}

In this section, we recall the Klein model, the Epstein-Penner decomposition and the facts about regular subdivisions used later.

\subsection{The hyperboloid and Klein models}
Equip $\R^{3,1}$ with the Lorentzian form $\langle x,y\rangle=-x_0y_0+x_1y_1+x_2y_2+x_3y_3$. Let
\begin{equation}
\H^3=\{x\in\R^{3,1}:\langle x,x\rangle=-1,\ x_0>0\}
  \nonumber \end{equation}
and let $L^+$ denote the future light cone. A nonzero vector is timelike if $\langle x,x\rangle<0$, lightlike if $\langle x,x\rangle=0$, and future-directed if $x_0>0$.

Radial projection to the affine chart $x_0=1$ gives the Klein model,
\begin{equation}\label{equation:klein-projection-prelim}
\pi(x)=\left(\frac{x_1}{x_0},\frac{x_2}{x_0},\frac{x_3}{x_0}\right).
\end{equation}
In this model geodesics are Euclidean line segments and totally geodesic planes are intersections of the Klein ball with affine planes. If $v_1,\ldots,v_r$ have positive $0$-th coordinate and $t_i\geq0$, then
\begin{equation}\label{equation:radial-convexity-prelim}
\pi\!\left(\sum_i t_iv_i\right)
 =\sum_i\frac{t_i(v_i)_0}{\sum_jt_j(v_j)_0}\,\pi(v_i).
\end{equation}
Thus radial projection preserves convex hulls. We refer to \cite{BenedettiPetronio} for these models.

\subsection{The Epstein-Penner decomposition}\label{subsection:EP}
Let $M=\H^3/\Gamma$ be a complete finite-volume cusped hyperbolic $3$-manifold. We do not assume that $M$ is orientable. Thus
$\Gamma$ is a torsion-free discrete subgroup of
$O^+(3,1)$, not necessarily of $SO^+(3,1)$. Choose a horospherical decoration of the cusps and let $\mathcal V\subset L^+$ be the union of the corresponding $\Gamma$-orbits of light vectors. Set
\begin{equation}
C=\operatorname{conv}(\mathcal V)\subset\R^{3,1}.
  \nonumber \end{equation}
Epstein--Penner \cite{EP} show that the three-dimensional faces of $\partial C$, after radial projection to the Klein model, give a locally finite $\Gamma$-invariant decomposition of $\H^3$ into convex ideal polyhedra. The quotient contains finitely many cells; see also \cite{Akiyoshi}.

Let $P$ be a three-dimensional face of $\partial C$ and set $A=\Aff(P)$. For the faces in the Epstein-Penner construction, $A$ is a spacelike supporting hyperplane of $C$ \cite{EP}. Hence, after normalization,
\begin{equation}\label{equation:support-plane-prelim}
A=\{x\in\R^{3,1}:\lambda(x)=1\},\qquad
\lambda(x)=-\langle x,w\rangle,
\end{equation}
for a future-directed timelike vector $w$. The vertices of $P$ lie in $L^+$. Since $A\cap L^+$ is strictly convex, the light vectors spanning $P$ are vertices of $P$.

Choose one lift
\begin{equation}
P_i\subset A_i,\qquad i=1,\ldots,N,
  \nonumber \end{equation}
for each three-dimensional cell of the quotient decomposition. Let $F$ be a two-dimensional face of $P_i$, and let $Q$ be the lifted cell on the other side of $F$. There are $j$ and $\gamma\in\Gamma$ such that $\gamma(Q)=P_j$. Then $\gamma(F)$ is a face of $P_j$, and
\begin{equation}
\gamma|_F:F\longrightarrow\gamma(F)
  \nonumber \end{equation}
is an affine homeomorphism. The reverse pairing is induced by $\gamma^{-1}$. Indeed, the deck transformation $\gamma$ acts
linearly on $\mathbb R^{3,1}$ and maps the affine
span of $Q$ onto $A_j$. Consequently, its
restriction to the paired face is affine.

No nontrivial element of $\Gamma$ stabilizes a lifted cell or one of its faces. Indeed, such an element permutes the vertices and fixes their barycenter. The barycenter is future-directed timelike and hence determines a point of $\H^3$, which is impossible for a nontrivial element of the torsion-free group $\Gamma$.

\subsection{Regular subdivisions}
Let $A$ be a real affine space, let $P\subset A$ be a convex polyhedron with vertex set $V$, and let $h:V\to\R$. Lift each vertex to
\begin{equation}
\widehat v=(v,h(v))\in A\times\R
  \nonumber
\end{equation}
and set
\begin{equation}
\widehat P_h=\operatorname{conv}\{\widehat v:v\in V\}.
  \nonumber
\end{equation}
A face of $\widehat P_h$ is called a \emph{lower face} if it is supported by a hyperplane of the form
\begin{equation}
t=r(x),
  \nonumber
\end{equation}
where $r:A\to\R$ is affine and $\widehat P_h$ lies in the half-space $t\geq r(x)$. The projections of the lower faces of $\widehat P_h$ to $A$ form the \textbf{regular (or coherent) subdivision} determined by $h$, denoted by $\Reg(P,h)$.

Equivalently, the cells of $\Reg(P,h)$ are the nonempty convex hulls
\begin{equation}
\operatorname{conv}\{v\in V:r(v)=h(v)\},
  \nonumber
\end{equation}
where $r$ ranges over affine functions satisfying
\begin{equation}
r(v)\leq h(v)\qquad(v\in V).
  \nonumber
\end{equation}
Thus contact with a single lifted vertex, an edge, or a higher-dimensional lower face gives a cell of the corresponding dimension; empty contact sets contribute no cell. Adding the restriction of an affine function on $A$ to $h$ does not change the subdivision.

The following restriction property is standard; see \cite[Lemma 2.3.15]{DRS}.

\begin{lemma}\label{lemma:restriction}
If $F$ is a face of $P$, then
\begin{equation}
\Reg(P,h)|_F=\Reg(F,h|_{V(F)}).
  \nonumber \end{equation}
\end{lemma}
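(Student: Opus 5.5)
We prove the equality $\Reg(P,h)|_F=\Reg(F,h|_{V(F)})$ by showing that the two sides have exactly the same cells. For the right-hand side we use the description of cells as contact sets of affine minorants. Here $\Reg(P,h)|_F$ means the collection of cells $\sigma\cap F$ with $\sigma\in\Reg(P,h)$; we will see that every cell lying in $F$ arises this way and is itself a cell of the subdivision. Since $F$ is a face of $P$, choose an affine function $\ell:A\to\R$ with $\ell\ge 0$ on $P$ and $\ell^{-1}(0)\cap P=F$. In particular $\ell(v)>0$ for every $v\in V\setminus V(F)$, and $V(F)=V\cap F$.

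\emph{Step 1: every cell of $\Reg(P,h)$ meets $F$ in a cell of $\Reg(F,h|_{V(F)})$ (or in the empty set).} Let $\sigma=\conv\{v\in V: r(v)=h(v)\}$ with $r\le h$ on $V$. Then $r|_{\Aff F}$ is an affine minorant of $h|_{V(F)}$, with contact set $\{v\in V(F): r(v)=h(v)\}$. Since $F$ is a face of $P$, we have $\sigma\cap F=\conv(\sigma\cap V\cap F)$, and this is exactly the convex hull of that contact set. So $\sigma\cap F$ is a cell of $\Reg(F,h|_{V(F)})$, or empty.

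\emph{Step 2: every cell of $\Reg(F,h|_{V(F)})$ is a cell of $\Reg(P,h)$.} Let $\tau=\conv\{v\in V(F): s(v)=h(v)\}$, where $s:\Aff F\to\R$ is affine and $s\le h$ on $V(F)$. First extend $s$ to an affine function $\tilde s$ on $A$; this is possible because $\Aff F$ is an affine subspace. Then set $r=\tilde s-K\ell$ for a large constant $K>0$. On $V(F)$ we have $r=s$. For $v\in V\setminus V(F)$ we have $\ell(v)>0$, so $r(v)<h(v)$ as soon as
\[
K>\max_{v\in V\setminus V(F)}\frac{\tilde s(v)-h(v)}{\ell(v)},
\]
which is a finite maximum because $V$ is finite. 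Hence $r$ is an affine minorant of $h$ on all of $V$, and its contact set is exactly the contact set of $s$. Therefore $\tau$ is a cell of $\Reg(P,h)$, and it lies in $F$.

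Combining the two steps gives the equality of cell collections. As a consistency check, the lower-face description gives the same result: the lower faces of $\widehat F_h$ are precisely the lower faces of $\widehat P_h$ contained in the vertical face over $F$. The main point to watch is Step 2, where the tilting by $K\ell$ must push every vertex outside $F$ strictly above $r$; finiteness of $V$ and $\ell>0$ off $F$ make this work. Step 1 uses only that a face $F$ meets any convex hull of vertices in the hull of the vertices lying in $F$, which holds because $F$ is exposed by $\ell$.
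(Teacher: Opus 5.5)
Your proof is correct and follows the paper's argument essentially step for step. For one inclusion you restrict the affine minorant $r$ to $\Aff(F)$. For the other you extend $s$ to $A$ and tilt by $K\ell$, which plays the role of the paper's $Kq$: an affine function vanishing on $F$ and positive at the vertices of $P$ outside $F$. Your explicit bound on $K$ and your justification of $\sigma\cap F=\conv(\sigma\cap V\cap F)$ via the exposing function just spell out details the paper leaves implicit. Your setup also absorbs the case $F=P$ (take $\ell\equiv 0$, so the maximum is vacuous), which the paper treats separately.
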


\begin{proof}
Let $C$ be a cell of $\Reg(P,h)$ with $C\cap F\neq\varnothing$. Choose an affine function $r:A\to\R$ such that $r(v)\leq h(v)$ for all $v\in V$ and the vertices of $C$ are precisely the vertices where equality holds. Since $F$ is a face of $P$, the vertices of $C\cap F$ are exactly the contact vertices lying in $F$. Hence $r|_{\Aff(F)}$ determines $C\cap F$ as a cell of $\Reg(F,h|_{V(F)})$.

Conversely, let $D$ be a cell of $\Reg(F,h|_{V(F)})$, determined by an affine function $r_F$ on $\Aff(F)$. Extend $r_F$ to an affine function $r_0$ on $A$. If $F=P$ there is nothing to prove. Otherwise, since $F$ is a face of $P$, there is an affine function $q:A\to\R$ which vanishes on $\Aff(F)$ and is positive at every vertex of $P$ outside $F$. For sufficiently large $K$, the affine function $r_0-Kq$ lies below $h$ at every vertex outside $F$, strictly so, while its values on $F$ are unchanged. Its contact vertices are therefore exactly those determining $D$. Thus $D$ is the intersection with $F$ of a cell of $\Reg(P,h)$, proving the equality of the induced subdivisions.
\end{proof}

\section{Admissible polyhedral systems}\label{section:compatible}

The compatibility problem is naturally stated in the following affine setting. In this section, a three-dimensional convex polyhedron in an affine $3$-space means the convex hull of finitely many points with nonempty interior, and a face means a two-dimensional face unless otherwise specified. We encode regular subdivisions by vertex
heights. Compatibility across paired faces is expressed by linear conditions modulo affine functions. Euler's formula then gives the dimension estimate in Lemma \ref{lemma:surplus}. Related compatible tetrahedralization problems, including the extension of prescribed boundary triangulations, were studied by Bern \cite{Bern}.

\begin{definition}\label{definition:admissible-system}
An \emph{admissible polyhedral system} consists of finitely many three-dimensional convex polyhedra
\begin{equation}
\mathcal P=\{P_1,\ldots,P_N\},\qquad P_i\subset A_i,
  \nonumber \end{equation}
where each $A_i$ is a real affine $3$-space, together with a pairing of all two-dimensional faces. If $F^-_\alpha\subset P_i$ is paired with $F^+_\alpha\subset P_j$, the pairing is an affine homeomorphism
\begin{equation}
\phi_\alpha:F^-_\alpha\longrightarrow F^+_\alpha,
  \nonumber \end{equation}
and the reverse pairing is $\phi_\alpha^{-1}$. Two faces of the same polyhedron may be paired.

A \emph{height function} on $\mathcal P$ is a collection $h=(h_i)$ with
\begin{equation}
h_i:V(P_i)\longrightarrow\R.
  \nonumber \end{equation}
It is \emph{compatible} if for every face pairing $\phi_\alpha:F^-_\alpha\to F^+_\alpha$,
\begin{equation}\label{equation:compatible-height}
h^-_\alpha\sim h^+_\alpha\circ\phi_\alpha.
\end{equation}
Two height functions $h=(h_i)$ and $h'=(h'_i)$ on $\mathcal P$ are equivalent, written $h\sim h'$, if $h_i-h_i'$ is the restriction to $V(P_i)$ of an affine function on $\Aff(P_i)$ for every $i$.
\end{definition}

Given a height function $h=(h_i)$, subdivide each polyhedron $P_i$ by $\Reg(P_i,h_i)$. If $h$ is compatible, these subdivisions agree on every pair of identified faces. Indeed, by Lemma \ref{lemma:restriction}, the subdivision induced on a face $F\subset P_i$ is $\Reg(F,h_i|_{V(F)})$; on paired faces the induced height functions are equivalent and therefore determine the same regular subdivision. Thus a compatible height function gives a compatible subdivision of the whole polyhedral system.

\begin{figure}[htbp]
\centering
\includegraphics[scale=0.35]{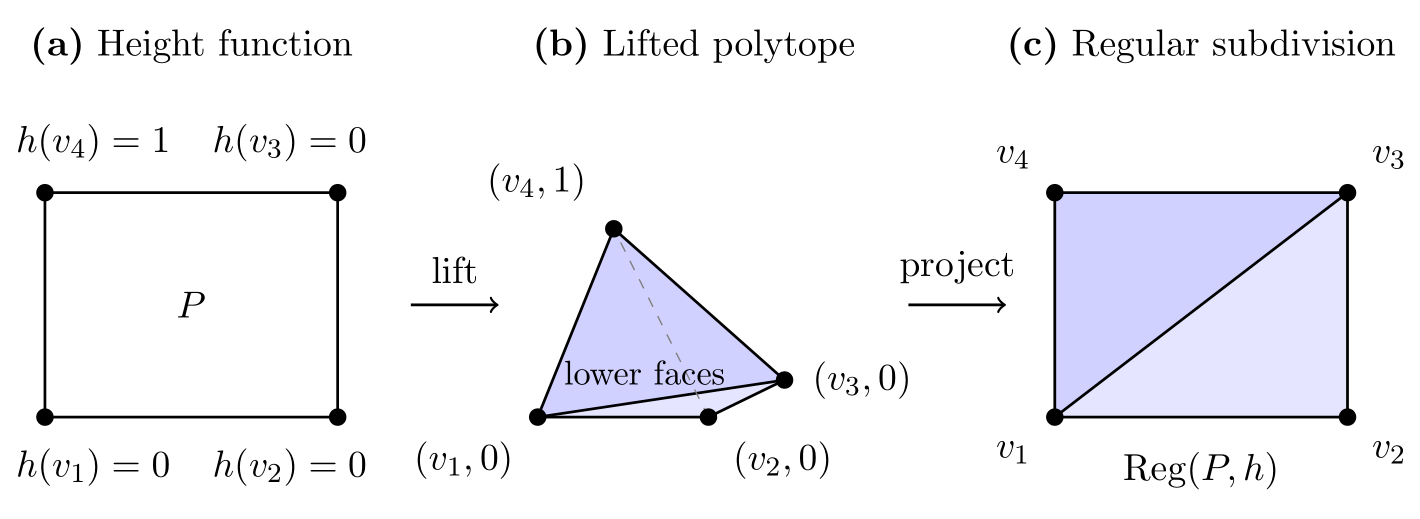}
\caption{
A height function and its induced regular subdivision.
Here $h(v_1)=h(v_2)=h(v_3)=0$ and $h(v_4)=1$.
The two lower faces project to
$\conv\{v_1,v_2,v_3\}$ and
$\conv\{v_1,v_3,v_4\}$,
so $\Reg(P,h)$ is the subdivision of $P$ along the diagonal
$v_1v_3$.
}
\label{fig:regular-subdivision}

\end{figure}

\begin{lemma}\label{lemma:surplus}
Let $f_i$ be the number of two-dimensional faces of $P_i$. The vector space of compatible height classes on $\mathcal P$ has dimension at least
\begin{equation}\label{equation:surplus}
\frac12\sum_i(f_i-4).
\end{equation}
In particular, if one of the $P_i$ is not a tetrahedron, there is a nonzero compatible height class.
\end{lemma}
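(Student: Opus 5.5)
Set up a linear map and count dimensions. Let $H=\bigoplus_i \R^{V(P_i)}$ be the space of height functions, and let $\mathcal A\subset H$ be the subspace of functions whose $i$-th component is the restriction of an affine function on $A_i$. Since $P_i$ has nonempty interior, its vertices affinely span $A_i$, so $\dim\mathcal A=\sum_i 4$ and $\dim(H/\mathcal A)=\sum_i(v_i-4)$, where $v_i=|V(P_i)|$.

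For each face $F$ with vertex set $V(F)$, define $W_F=\R^{V(F)}/\{\text{affine functions on }\Aff(F)\}$. Its dimension is $|V(F)|-3$, because the vertices of a $2$-dimensional face affinely span its plane. For each pairing $\alpha$, define the linear map
\begin{equation}
L_\alpha:H\longrightarrow W_{F^-_\alpha},\qquad h\longmapsto \bigl[h^-_\alpha-h^+_\alpha\circ\phi_\alpha\bigr].
  \nonumber \end{equation}
This is well defined because $\phi_\alpha$ is affine, so it pulls affine functions back to affine functions and is a bijection on vertex sets. The compatible height functions form the kernel of $L=\bigoplus_\alpha L_\alpha$. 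Every $L_\alpha$ vanishes on $\mathcal A$, since restrictions of affine functions to a face are affine. Hence $\mathcal A\subset\ker L$, and the space of compatible classes $\ker L/\mathcal A$ has dimension at least
\begin{equation}
\sum_i(v_i-4)-\sum_\alpha\bigl(|V(F^-_\alpha)|-3\bigr).
  \nonumber \end{equation}

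Now apply Euler's formula. Each pairing $\alpha$ uses two faces, and the faces of all the $P_i$ are partitioned into these pairs. Therefore
\begin{equation}
\sum_\alpha\bigl(|V(F_\alpha)|-3\bigr)=\tfrac12\sum_i\sum_{F\subset P_i}\bigl(|V(F)|-3\bigr)=\tfrac12\sum_i(2e_i-3f_i),
  \nonumber \end{equation}
because $\sum_{F}|V(F)|=2e_i$ for the boundary of a convex polyhedron. Euler's formula $v_i-e_i+f_i=2$ gives $e_i=v_i+f_i-2$. The per-polyhedron contribution is then
\begin{equation}
(v_i-4)-\tfrac12\bigl(2v_i+2f_i-4-3f_i\bigr)=(v_i-4)-(v_i-2)+\tfrac12 f_i=\tfrac12(f_i-4).
  \nonumber \end{equation}
Summing over $i$ gives the lower bound \eqref{equation:surplus}.

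For the final assertion, every $3$-dimensional convex polyhedron has $f_i\ge4$, with equality only for a tetrahedron. So if some $P_i$ is not a tetrahedron, the bound is at least $1/2$. Since dimensions are integers, the dimension is at least $1$, and a nonzero compatible class exists.

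The only delicate point is well-definedness of $L_\alpha$ and the affine-span dimension counts: the vertices of $P_i$ span $A_i$, and the vertices of each face span its plane. Both follow from convexity and nondegeneracy. The self-pairing case, where $F^\pm_\alpha$ lie in the same $P_i$, needs no special treatment in this linear count.
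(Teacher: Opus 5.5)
Your proof is correct and follows essentially the paper's argument: compatible classes are the kernel of the face-pairing map into $\bigoplus_\alpha \R^{V(F^-_\alpha)}/\Aff(F^-_\alpha)$, and Euler's formula turns $\sum_i(v_i-4)-\sum_\alpha(n_\alpha-3)$ into $\frac12\sum_i(f_i-4)$. The only difference is cosmetic: you define the map on raw height functions and then quotient by $\mathcal A\subset\ker L$, whereas the paper defines it directly on the quotient $\bigoplus_i H_i$.
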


\begin{proof}
Let $V_i=V(P_i)$ and set
\begin{equation}
\Aff(P_i)=\{r|_{V_i}:r:A_i\to\R\text{ affine}\},
\qquad
H_i=\R^{V_i}/\Aff(P_i).
  \nonumber \end{equation}
Since $A_i$ has dimension three,
\begin{equation}
\dim H_i=|V_i|-4.
  \nonumber \end{equation}
For a paired face $F^-_\alpha$ with $n_\alpha$ vertices, let
\begin{equation}
\Aff(F^-_\alpha)=\{r|_{V(F^-_\alpha)}:r:\operatorname{aff}(F^-_\alpha)\to\R\text{ affine}\}
  \nonumber \end{equation}
and put
\begin{equation}
Q_\alpha=\R^{V(F^-_\alpha)}/\Aff(F^-_\alpha),
\qquad
\dim Q_\alpha=n_\alpha-3.
  \nonumber \end{equation}
The face pairings define a linear map
\begin{equation}
B:\bigoplus_i H_i\longrightarrow\bigoplus_\alpha Q_\alpha,
\qquad
B_\alpha([h])=[h^-_\alpha-h^+_\alpha\circ\phi_\alpha].
  \nonumber \end{equation}
It is well defined because the restriction of an affine function remains affine after composition with an affine homeomorphism. The compatible height classes are precisely $\ker B$.

Write $v_i,e_i,f_i$ for the numbers of vertices, edges and two-dimensional faces of $P_i$, and let $F$ be the number of pairs of faces. Then
\begin{equation}
\sum_i f_i=2F,
\qquad
\sum_i e_i=\sum_\alpha n_\alpha.
  \nonumber \end{equation}
Euler's formula gives
\begin{equation}
\dim\bigoplus_iH_i-\dim\bigoplus_\alpha Q_\alpha
 =\sum_i(v_i-4)-\sum_\alpha(n_\alpha-3)
 =\frac12\sum_i(f_i-4).
  \nonumber \end{equation}
Hence $\dim\ker B$ is at least the number in \eqref{equation:surplus}. Since a three-dimensional convex polyhedron has at least four faces, with equality only for a tetrahedron, the last statement follows.
\end{proof}

\begin{proposition}\label{proposition:one-refinement}
Let $\mathcal P$ be an admissible polyhedral system containing a non-tetrahedral polyhedron. There is a compatible height function for which the regular subdivisions are not all trivial. The three-dimensional cells of these subdivisions, with the induced face pairings, form another admissible polyhedral system.
\end{proposition}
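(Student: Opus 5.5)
Start from Lemma \ref{lemma:surplus}: since some $P_i$ is not a tetrahedron, there is a nonzero class $[h]\in\ker B\subset\bigoplus_i H_i$. Choose a representative $h=(h_i)$. Being compatible, it determines regular subdivisions $\Reg(P_i,h_i)$ that agree across every face pairing, as explained right after Definition \ref{definition:admissible-system}. We must show these subdivisions are not all trivial, and then that the resulting 3-cells form an admissible system.

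\emph{Nontriviality.} Since $[h]\neq 0$, some $h_i$ is not the restriction of an affine function on $A_i$. We claim that $\Reg(P_i,h_i)$ is then nontrivial. Suppose instead that the subdivision is trivial, so that $P_i$ itself is a cell. Then there is an affine $r$ with $r\le h_i$ on $V(P_i)$ and equality on the vertex set of that cell. Every vertex of $P_i$ is an extreme point, so every vertex must be a contact vertex; otherwise the cell $\operatorname{conv}\{v: r(v)=h_i(v)\}$ would miss a vertex and would be strictly smaller than $P_i$. Hence $h_i=r$ on $V(P_i)$, a contradiction. So some $P_i$ is genuinely subdivided into at least two 3-cells. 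Here we use that the lifted vertices all lie over distinct vertices of $P_i$, so the lower hull covers $P_i$ and its facets project to full-dimensional cells.

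\emph{Admissibility of the refined system.} The new polyhedra are the 3-cells $C$ of the $\Reg(P_i,h_i)$. Each is the convex hull of a subset of $V(P_i)$ with nonempty interior, so each is a three-dimensional convex polyhedron in $A_i$. We then pair their 2-faces in two cases.

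\begin{enumerate}
\item A 2-face of $C$ lying in the interior of $P_i$ is shared with exactly one other cell of the same subdivision, because the lower faces of $\widehat P_{h_i}$ form a polyhedral complex whose projection tiles $P_i$. These two faces are paired by the identity map, which is affine.
\item A 2-face of $C$ lying in $\partial P_i$ is contained in a face $F^-_\alpha$. By Lemma \ref{lemma:restriction}, it is a 2-cell of $\Reg(F^-_\alpha,h_i|_{V(F^-_\alpha)})$. Compatibility gives $h^-_\alpha\sim h^+_\alpha\circ\phi_\alpha$, so $\phi_\alpha$ carries $\Reg(F^-_\alpha,h^-_\alpha)$ cell-by-cell onto $\Reg(F^+_\alpha,h^+_\alpha)$. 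By Lemma \ref{lemma:restriction} again, the image cell is a 2-face of a unique 3-cell of $P_j$. The pairing is then the restriction of $\phi_\alpha$, which is affine, and its inverse is the restriction of $\phi_\alpha^{-1}$.
\end{enumerate}

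The main point needing care is that this pairing is a genuine involution covering every 2-face exactly once. A boundary 2-cell lies in exactly one face of $P_i$, since 2-cells meeting $\partial P_i$ in two dimensions lie in a single facet. Self-paired faces, where $F^-_\alpha$ and $F^+_\alpha$ lie in the same $P_i$, cause no problem, since the definition permits them. Finally, the number of cells is finite, so the new collection is an admissible polyhedral system.
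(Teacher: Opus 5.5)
Your proof is correct and follows the paper's argument. Nontriviality comes from a nonzero class in $\ker B$, since a trivial $\Reg(P_i,h_i)$ forces $h_i$ to agree with an affine function at every vertex; admissibility comes from pairing interior faces by the identity and boundary faces by restrictions of $\phi_\alpha$ via Lemma \ref{lemma:restriction}. You also make explicit some details the paper leaves implicit, namely that a boundary $2$-face lies in a single face of $P_i$ and that the adjacent cell is unique.
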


\begin{proof}
Choose a nonzero compatible height class by Lemma \ref{lemma:surplus}. At least one subdivision is nontrivial. Indeed, if $\Reg(P_i,h_i)$ were trivial for every $i$, then $P_i$ itself would be a lower cell for each $i$, so $h_i$ would agree with an affine function at every vertex of $P_i$. The height class would therefore be zero.

On a paired face, the induced subdivisions agree by compatibility and Lemma \ref{lemma:restriction}. If a new face $G$ lies in an original paired face $F$ with pairing $\phi:F\to F'$, then $\phi(G)$ is a face of the induced subdivision of $F'$, and $G$ is paired with $\phi(G)$ by $\phi|_G$. An interior face is shared by exactly two three-dimensional cells. Regarding these cells as disjoint, we pair the two copies of the face by the identity. Thus the three-dimensional cells, with these induced face pairings, form an admissible polyhedral system.
\end{proof}

\begin{theorem}\label{theorem:affine-triangulation}
Every admissible polyhedral system can be subdivided into tetrahedra, without adding vertices, such that the two triangulations of every pair of identified faces coincide under the given identification. The tetrahedra therefore glue together using the original face identifications.
\end{theorem}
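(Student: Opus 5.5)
We argue by iterating Proposition \ref{proposition:one-refinement} and controlling termination with a monotone complexity. Start with the given admissible polyhedral system $\mathcal P$. If every polyhedron is a tetrahedron, there is nothing to do. Otherwise the proposition provides a compatible height function whose regular subdivisions are not all trivial. The three-dimensional cells, with the induced face pairings, form a new admissible polyhedral system $\mathcal P'$. Every cell of $\Reg(P_i,h_i)$ is the convex hull of a subset of $V(P_i)$, so no vertices are added. By construction, every face pairing of $\mathcal P'$ is a restriction of an original pairing $\phi_\alpha$ or an identity across an interior face. Repeating the step, the final system, if it consists only of tetrahedra, is a subdivision of each $P_i$ into tetrahedra on the original vertices. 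On each original face $F_\alpha^-$, the induced triangulation is carried onto that of $F_\alpha^+$ by $\phi_\alpha$, and the tetrahedra glue via the original identifications.

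It remains to show the iteration terminates. The first natural measure is $\sum_i |V(P_i)|$ over non-tetrahedral cells, but a nontrivial subdivision may keep a cell with all the vertices of the parent while splitting elsewhere. I would instead use a lexicographic multiset complexity. Attach to each cell $P$ the number $|V(P)|$, and order finite multisets of integers $\ge 4$ by the Dershowitz--Manna multiset ordering, which is well founded.

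At each step, every polyhedron $P_i$ with $\Reg(P_i,h_i)$ nontrivial is replaced by finitely many cells. Each such cell is a proper sub-polytope: it is a proper lower cell, hence a proper subset of $P_i$. A convex hull of a subset of $V(P_i)$ that is not all of $P_i$ must miss at least one vertex of $P_i$, since the vertices are extreme. Therefore each new cell has strictly fewer vertices than its parent. Polyhedra with trivial subdivision are unchanged. Replacing one element of the multiset by finitely many strictly smaller ones strictly decreases the multiset in the Dershowitz--Manna order. Since at least one subdivision is nontrivial at each step, the complexity strictly decreases, so the process terminates.

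At termination, Proposition \ref{proposition:one-refinement} can no longer be applied, so all cells are tetrahedra. Tetrahedra have exactly $4$ vertices, and $4$ is the minimal value, so the process cannot stall at a non-tetrahedral system.

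The main subtle point is ensuring the refinement is genuinely a subdivision of the original system. In particular, one must check that faces of new cells lying in an original face $F$ are exactly the cells of $\Reg(F,h_i|_{V(F)})$. This follows from Lemma \ref{lemma:restriction}, and compatibility then makes the pairings restrict correctly. A second point is that nested regular subdivisions compose to a polyhedral subdivision of $P_i$ whose restriction to each original face is a triangulation. This follows by induction, since each step subdivides faces of cells within original faces consistently with $\phi_\alpha$.
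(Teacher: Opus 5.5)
Your proof is correct. It differs from the paper's only in the termination argument.

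The paper uses an increasing, bounded counter. Each application of Proposition \ref{proposition:one-refinement} strictly increases the total number of three-dimensional cells. Meanwhile, the number of cells inside $P_i$ is at most the number of affinely independent four-element subsets of $V(P_i)$: each cell contains such a subset, and two interior-disjoint cells cannot share one, since it spans a full-dimensional tetrahedron.

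You use a decreasing, well-founded measure instead. Every cell's vertices are extreme points of that cell, so a proper cell spanned by a subset of them has strictly fewer vertices. Each split therefore lowers the multiset of vertex counts in the Dershowitz--Manna order.

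What each route buys:
\begin{itemize}
\item The paper's argument is more elementary, since it needs no multiset ordering. It gives an explicit bound of at most $\sum_i\binom{|V(P_i)|}{4}-N$ refinement steps.
\item Your argument is more local and avoids the interior-disjointness counting. It also shows that refinements inside $P_i$ nest to depth at most $|V(P_i)|-4$.
\end{itemize}

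One small correction to your motivating aside: a nontrivial subdivision can never keep a cell containing all the vertices of its parent, as your own later argument shows. The real reason $\sum|V|$ is not monotone is that splits can increase it; a cube with $8$ vertices splits into two prisms with $6+6$ vertices. This does not affect the proof.
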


\begin{proof}
Apply Proposition \ref{proposition:one-refinement} as long as a non-tetrahedral cell remains. Each application increases the number of three-dimensional cells.

Fix an original polyhedron $P_i$. Every three-dimensional cell contained in $P_i$ contains four affinely independent vertices of $P_i$. Two distinct cells in the same subdivision cannot contain the same four vertices, since the tetrahedron spanned by them has nonempty interior. Hence the number of cells contained in $P_i$ is bounded by the number of affinely independent four-element subsets of $V(P_i)$. The refinement process is therefore finite.

At the final stage every cell is a tetrahedron. The identity pairings on interior faces recover each $P_i$, while the boundary pairings are restrictions of the original ones. Hence these tetrahedra descend to a triangulation of the
original quotient in the usual face-pairing sense, allowing
vertex identifications.
\end{proof}

\begin{example}\label{example:cube}
Let $P=[0,1]^3$, with opposite faces paired by translations. Write
\begin{equation}
v_{ijk}=(i,j,k),\qquad i,j,k\in\{0,1\},
  \nonumber \end{equation}
and abbreviate $v_{ijk}$ to $ijk$. All eight vertices represent the same vertex in the quotient.

Up to addition of an affine function, we can normalize a height by
\begin{equation}
h(000)=h(100)=h(010)=h(001)=0,
  \nonumber \end{equation}
and set
\begin{equation}
h(110)=a,\qquad h(101)=b,\qquad h(011)=c,\qquad h(111)=d.
  \nonumber \end{equation}
On the four vertices $001,101,011,111$ of a square, a height function $k$ is affine exactly when
\begin{equation}
k_{001}+k_{111}=k_{101}+k_{011}.
  \nonumber \end{equation}
The three pairs of opposite faces therefore give the same compatibility condition
\begin{equation}
a+b+c-d=0.
  \nonumber \end{equation}
Choose
\begin{equation}
\begin{array}{c|cccccccc}
v&000&100&010&001&110&101&011&111\\ \hline
h&0&0&0&0&-1&0&0&-1.
\end{array}
  \nonumber \end{equation}
The lower faces of the lifted cube $\conv\{(v,h(v)):v\in V(P)\}$ project to the two triangular prisms
\begin{equation}
\begin{aligned}
P_A&=\conv\{000,001,010,011,110,111\},\\
P_B&=\conv\{000,001,100,101,110,111\},
\end{aligned}
  \nonumber \end{equation}
with common rectangle $R=\conv\{000,001,110,111\}$.

For the second refinement, after normalizing by affine functions on the two prisms, take
\begin{equation}
\begin{array}{c|rrrrrr}
v&000&001&010&011&110&111\\ \hline
h_A&0&0&0&-1&0&-2
\end{array}
\qquad
\begin{array}{c|rrrrrr}
v&000&001&100&101&110&111\\ \hline
h_B&0&0&0&-1&0&-2.
\end{array}
\nonumber
\end{equation}
These compatible heights give the regular subdivisions
\begin{equation}
\begin{gathered}
\conv\{000,010,110,111\},\quad
\conv\{000,010,011,111\},\quad
\conv\{000,001,011,111\},\\
\conv\{000,100,110,111\},\quad
\conv\{000,100,101,111\},\quad
\conv\{000,001,101,111\}.
\end{gathered}
  \nonumber \end{equation}
On the common rectangle $R$, both subdivisions use the diagonal $000$--$111$.
\end{example}

\begin{figure}[htbp]
\centering
\includegraphics[scale=0.45]{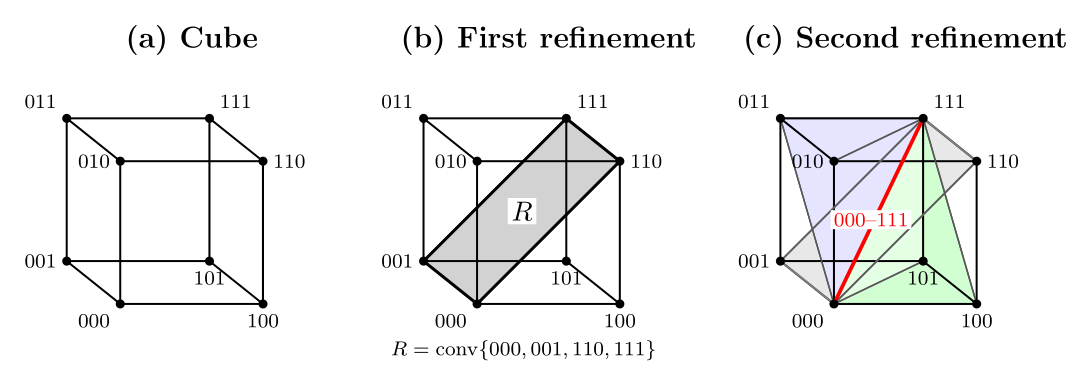}
\caption{
The cube example.
The first refinement introduces the interior face
$R=\operatorname{conv}\{000,001,110,111\}$, shown in gray.
The second refinement subdivides the two triangular prisms into tetrahedra.
The new triangular faces are shaded, and the common diagonal
$000$--$111$ of $R$ is shown in red.
}
\label{fig:cube-refinement}

\end{figure}

\FloatBarrier

\section[Proof of Theorem 1.1]{Proof of Theorem \ref{main}}\label{section:proof-main}
Let $P_i\subset A_i$, $i=1,\ldots,N$, be the lifts of the Epstein-Penner cells chosen in Section \ref{subsection:EP}. The face pairings induced by deck transformations are affine homeomorphisms, so the $P_i$ form an admissible polyhedral system. By Theorem \ref{theorem:affine-triangulation}, they admit compatible triangulations into tetrahedra using only their vertices.

By \eqref{equation:radial-convexity-prelim}, radial projection maps each tetrahedron to the convex hull of its ideal vertices. Let $v_1,\ldots,v_4\in P_i$ be the vertices of one of these tetrahedra. If
\begin{equation}
\sum_{k=1}^4 c_kv_k=0,
  \nonumber \end{equation}
then $\lambda_i(v_k)=1$ gives $\sum_kc_k=0$. Since the $v_k$ are affinely independent, $c_k=0$ for every $k$. Hence the vectors $v_k$ are linearly independent. Their positive rescalings $(1,\pi(v_k))$ are also linearly independent, so the four projected vertices are affinely independent in the Klein ball. Thus every projected tetrahedron is nondegenerate.

Extend the triangulations equivariantly to all lifts
of the Epstein--Penner cells. They agree on paired
faces and give a locally finite, $\Gamma$-invariant
face-to-face triangulation of $\mathbb H^3$.

There are finitely many tetrahedron orbits. Passing
to the quotient gives a finite ideal triangulation
of $M$. Since the tetrahedra geometrically subdivide
the Epstein--Penner cells, this triangulation
realizes the given complete hyperbolic structure.
If $M$ is orientable, the vertices of the tetrahedra
can be ordered so that all tetrahedra are positively
oriented.


\begin{corollary}\label{corollary:strict-angle}
Every finite-volume cusped hyperbolic $3$-manifold admits an ideal triangulation carrying a strict angle structure.
\end{corollary}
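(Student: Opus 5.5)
The natural route is through Theorem \ref{main} and the standard fact that a geometric ideal triangulation gives a strict angle structure. By Theorem \ref{main}, $M$ has a finite ideal triangulation $\mathcal T$ whose ideal tetrahedra are nondegenerate hyperbolic ideal tetrahedra. They are glued by isometries along faces and realize the complete hyperbolic structure. For each tetrahedron, take the dihedral angles at its six edges. The three angles at the edges meeting any ideal vertex are the angles of a Euclidean triangle, namely the cusp cross-section of that vertex. So they are strictly positive and sum to $\pi$. Opposite edges carry equal angles. This gives the tetrahedron conditions of an angle structure, with strict positivity. Strict positivity is exactly where nondegeneracy is used: the proof of Theorem \ref{main} showed that the four projected vertices are affinely independent, so no tetrahedron is flat.

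The remaining condition is the edge equation. For every edge $e$ of $\mathcal T$, the dihedral angles of the tetrahedron corners around $e$, counted with multiplicity, should sum to $2\pi$. To check it, lift to $\H^3$, where the triangulation is the locally finite, face-to-face, $\Gamma$-invariant tessellation built in the proof of Theorem \ref{main}. Take a lift $\tilde e$ of $e$. The tetrahedra containing $\tilde e$ fit cyclically around it and fill a neighbourhood of an interior point of $\tilde e$, because the tessellation covers $\H^3$ and the tetrahedra have disjoint interiors. So their dihedral angles at $\tilde e$ sum to $2\pi$.

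This cyclic family maps bijectively onto the corners of the quotient tetrahedra at $e$. No nontrivial deck transformation stabilizes $\tilde e$ while mapping one of these corners to another, since $\Gamma$ acts freely and any such element would fix a point of $\tilde e$. The corners are therefore counted once each, with multiplicity, and the edge equation holds. In the non-orientable case, the angle structure equations do not depend on orientation, so nothing changes. Dihedral angles are unsigned, and the oriented-shape remark after the proof of Theorem \ref{main} is not needed.

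The only step needing care is this last bijection between corners around $\tilde e$ and corners around $e$. The point to get right is that the equivariant extension really gives a genuine face-to-face triangulation of $\H^3$, not merely one compatible on paired faces. That was established in the proof of Theorem \ref{main}, where the subdivisions agree on paired faces and are extended equivariantly, so I would invoke it directly. Everything else is the standard Euclidean-cusp computation for ideal tetrahedra.
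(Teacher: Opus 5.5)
Your argument is the paper's own, spelled out in more detail. The paper states the corollary without a separate proof. It reads it off from Theorem \ref{main} by taking the dihedral angles of the geometric ideal tetrahedra, with the edge equations coming from the face-to-face triangulation of $\H^3$. Compare the proof of Corollary \ref{corollary:positive-gluing} and the definition of $\alpha_M$ in Section \ref{section:consequences}.

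One justification needs repair. You claim a deck transformation stabilizing $\tilde e$ would fix a point of $\tilde e$, but this is not automatic. A loxodromic element with axis $\tilde e$ fixes both endpoints, translates along $\tilde e$, and permutes the tetrahedra around it, all without a fixed point. Freeness of the action alone does not exclude it.

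It can be excluded as follows. By local finiteness, only finitely many tetrahedra contain $\tilde e$, and $\operatorname{Stab}_\Gamma(\tilde e)$ acts on this finite set. The action is free, since no nontrivial element of $\Gamma$ stabilizes a tetrahedron, by the barycenter argument of Section \ref{subsection:EP}. Hence $\operatorname{Stab}_\Gamma(\tilde e)$ is finite, and therefore trivial because $\Gamma$ is torsion-free. Alternatively, the endpoints of $\tilde e$ are parabolic fixed points, and in a discrete group their stabilizers contain no loxodromic elements.

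With this fix, the corners around $\tilde e$ correspond bijectively to the corners at $e$, and the rest of your proof stands.
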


\begin{corollary}\label{corollary:positive-gluing}
Every orientable finite-volume cusped hyperbolic $3$-manifold has an ideal triangulation for which Thurston's gluing and completeness equations admit a solution with all shape parameters in the upper half-plane.
\end{corollary}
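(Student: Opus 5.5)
The statement immediately above is Corollary \ref{corollary:positive-gluing}. I will deduce it directly from the geometric ideal triangulation constructed in Section \ref{section:proof-main}, via Theorem \ref{main}, rather than prove anything new.

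\textbf{Step 1: shape parameters.} Let $M$ be orientable and take the finite geometric ideal triangulation of $M$ given by Theorem \ref{main}. Each tetrahedron is the radial projection of a tetrahedron in some Epstein--Penner cell $P_i$. By the nondegeneracy argument in Section \ref{section:proof-main}, the four ideal vertices of each projected tetrahedron are affinely independent in the Klein ball. So they are four distinct points of $\partial\H^3$ not lying on a common plane, and the tetrahedron is a genuine, nonflat ideal tetrahedron. Fix an orientation of $M$, lift it to $\H^3$, and order the vertices of each tetrahedron so that the ordering is positively oriented. Since $\Gamma$ preserves the orientation, this ordering can be chosen consistently on each orbit. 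Thurston's shape parameter of an edge is the cross-ratio of the ideal vertices in the upper half-space model. For a positively oriented nondegenerate ideal tetrahedron this cross-ratio has strictly positive imaginary part, so all shapes lie in the upper half-plane.

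\textbf{Step 2: gluing equations.} The tetrahedra tile a neighborhood of each edge lift in $\H^3$ without overlap, because they subdivide a $\Gamma$-invariant face-to-face decomposition of $\H^3$. Going once around an edge, the dihedral angles therefore sum to $2\pi$. The developing images of successive tetrahedra around the edge close up, so the product of the edge shape parameters is $1$. Together these two facts give Thurston's gluing equation in its logarithmic form, with argument sum equal to $2\pi$.

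\textbf{Step 3: completeness equations.} The structure obtained by gluing the tetrahedra is the given complete structure on $M$. Hence for each cusp the holonomy of the induced Euclidean similarity structure on the cusp torus is by translations. Equivalently, the derivative of the holonomy of each peripheral curve is $1$, which is exactly Thurston's completeness equation.

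\textbf{Main obstacle.} The only point that needs care is the consistent orientation in Step 1. It requires that $\Gamma\subset SO^+(3,1)$ in the orientable case, so the chosen positive vertex orderings are preserved under the face pairings. The rest is the standard dictionary between geometric ideal triangulations and solutions of Thurston's equations.
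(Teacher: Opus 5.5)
Your proposal is correct and follows the paper's argument. You take the shape parameters of the geometric triangulation from Theorem \ref{main}, obtain the edge equations from the face-to-face tiling of $\mathbb{H}^3$, and obtain the completeness equations from the Euclidean (translation) holonomy of the cusp cross-sections; you also spell out the nondegeneracy and consistent-orientation details that the paper leaves implicit.
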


\begin{proof}
Take the shape parameters of the tetrahedra. The edge equations follow from the geometric face-to-face triangulation in $\H^3$, and the completeness equations follow from the induced Euclidean triangulations of sufficiently small horospherical cusp sections.
\end{proof}

\section{Projective face pairings and manifolds with geodesic boundary}\label{section:projective}
For manifolds with totally geodesic boundary, the face pairings of the associated affine fellows are generally projective rather than affine, because hyperbolic isometries act projectively in the Klein model. We therefore need a projective analogue of the affine triangulation theorem. We first record how regular subdivisions transform under a projective map.

Let $F,F'$ be convex polygons and let
\begin{equation}\label{equation:projective-map}
\phi(x)=\frac{Ax+b}{c\cdot x+d},\qquad \lambda(x)=c\cdot x+d>0
\end{equation}
be the restriction of a projective transformation from $F$ onto $F'$, defined on the whole closed polygon. For a height function $h$ on $V(F)$, define
\begin{equation}\label{equation:projective-transport}
(T_\phi h)(\phi(v))=\frac{h(v)}{\lambda(v)}.
\end{equation}

\begin{lemma}\label{lemma:projective-transport}
The map $T_\phi$ sends restrictions of affine functions on $F$ to restrictions of affine functions on $F'$, and
\begin{equation}
\phi(\Reg(F,h))=\Reg(F',T_\phi h).
  \nonumber \end{equation}
Hence it induces an isomorphism
\begin{equation}
\R^{V(F)}/\Aff(F)\longrightarrow\R^{V(F')}/\Aff(F').
  \nonumber \end{equation}
\end{lemma}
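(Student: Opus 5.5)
The plan is to homogenize: a projective map becomes linear on the cone over the polygon, and heights rescale by the denominator $\lambda$. Write $F\subset\R^2$ (or its affine span) and embed it at height one as $\{(x,1)\}\subset\R^3$. The projective map \eqref{equation:projective-map} is induced by the linear map $L(x,s)=(Ax+bs,\,c\cdot x+ds)$, in the sense that $L(x,1)=\lambda(x)\,(\phi(x),1)$. Since $\lambda>0$ on the closed polygon, $L$ sends the cone $\R_{>0}\cdot(F\times\{1\})$ onto $\R_{>0}\cdot(F'\times\{1\})$.

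First I check the affine statement. Let $r(x)=a\cdot x+e$ be affine on $F$. I regard it as the linear form $\ell(x,s)=a\cdot x+es$, so that $r(v)=\ell(v,1)$. Then
\begin{equation}
(T_\phi r)(\phi(v))=\frac{\ell(v,1)}{\lambda(v)}=\ell\bigl(L^{-1}(\phi(v),1)\bigr).
\nonumber
\end{equation}
The function $y\mapsto \ell(L^{-1}(y,1))$ is affine in $y$, because $\ell\circ L^{-1}$ is linear. Hence $T_\phi$ maps $\Aff(F)$ into $\Aff(F')$. The same argument applied to $\phi^{-1}$, with denominator $1/\lambda\circ\phi^{-1}$, shows that $T_{\phi^{-1}}$ is inverse to $T_\phi$. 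Both maps are linear in $h$. This gives the isomorphism of quotients once the subdivision statement is proved.

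Next I compare the subdivisions through their cells. Recall that the cells of $\Reg(F,h)$ are the sets $\conv\{v:r(v)=h(v)\}$, where $r$ is affine with $r\le h$ on $V(F)$. Given such an $r$, put $r'=T_\phi r$, which is affine by the previous step. Since $\lambda(v)>0$, we have
\begin{equation}
r(v)\le h(v)\iff r'(\phi(v))\le (T_\phi h)(\phi(v)),
\nonumber
\end{equation}
and equality on one side holds exactly when it holds on the other. So the contact vertex sets correspond under $\phi$. Running the same argument with $\phi^{-1}$ shows the correspondence of admissible affine minorants is bijective.

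The remaining point is that $\phi$ carries $\conv(S)$ onto $\conv(\phi(S))$ for every $S\subset V(F)$. This follows from the homogeneous picture, in the same way as \eqref{equation:radial-convexity-prelim}. A point $\sum t_iv_i$ of $\conv(S)$ lifts to $\sum t_i(v_i,1)$. Applying $L$ gives $\sum t_i\lambda(v_i)(\phi(v_i),1)$, which projects to a convex combination of the $\phi(v_i)$ with weights proportional to $t_i\lambda(v_i)>0$. Therefore $\phi(\conv S)=\conv\phi(S)$, and $\phi(\Reg(F,h))=\Reg(F',T_\phi h)$ cell by cell.

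I expect the main obstacle to be bookkeeping rather than substance: checking that $\ell\circ L^{-1}$ evaluated at $(y,1)$ is affine in $y$, and that positivity of $\lambda$ on all of $F$ is what makes inequalities and convex hulls behave. Both are short once the homogeneous lift is in place.
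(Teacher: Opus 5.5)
Your proof is correct and follows essentially the same route as the paper. You homogenize, transport the affine minorant by $r'=\ell\circ L^{-1}$ (the paper's $r'(y)=aM^{-1}\widehat y$), use $\lambda>0$ to match inequalities and contact sets, and run the argument with $\phi^{-1}$ for the reverse inclusion. You also make explicit two points the paper leaves implicit, namely $\phi(\conv S)=\conv\phi(S)$ and that $T_{\phi^{-1}}$ inverts $T_\phi$; the latter gives the quotient isomorphism directly, without needing the subdivision statement.
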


\begin{proof}
Use homogeneous coordinates $\widehat x=(x,1)$ and choose a homogeneous matrix $M$ representing $\phi$, so that
\begin{equation}
M\widehat x=\lambda(x)\widehat{\phi(x)}.
  \nonumber \end{equation}
Let $C$ be a cell of $\Reg(F,h)$, and choose an affine function $r(x)=a\widehat x$ such that $r(v)\leq h(v)$ for all $v\in V(F)$ and the contact vertices are precisely the vertices of $C$. Define $r'(y)=aM^{-1}\widehat y$. Then
\begin{equation}
r'(\phi(x))=\frac{r(x)}{\lambda(x)}.
  \nonumber \end{equation}
Together with \eqref{equation:projective-transport} and $\lambda>0$, this shows that the inequalities and equality vertices correspond exactly under $\phi$. Hence $\phi(C)$ is a cell of $\Reg(F',T_\phi h)$. Applying the same argument to $\phi^{-1}$ gives the reverse inclusion, and therefore the two regular subdivisions correspond. The same formula, without the inequalities, shows that $T_\phi$ sends restrictions of affine functions to restrictions of affine functions. This proves the lemma.
\end{proof}

\begin{theorem}\label{theorem:projective-triangulation}
Let $P_1,\ldots,P_N$ be three-dimensional convex polyhedra in affine charts, with their two-dimensional faces paired by restrictions of projective transformations defined on the closed faces. Then the $P_i$ can be subdivided into tetrahedra, without adding vertices, so that the triangulations agree on every pair of identified faces.
\end{theorem}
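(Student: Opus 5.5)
We follow the proof of Theorem \ref{theorem:affine-triangulation} and replace the affine compatibility condition by its projective analogue, using Lemma \ref{lemma:projective-transport}.

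\emph{Step 1: definitions.} Call a height function $h=(h_i)$ on $\{P_i\}$ \emph{projectively compatible} if, for every face pairing $\phi_\alpha:F^-_\alpha\to F^+_\alpha$, we have $T_{\phi_\alpha}(h^-_\alpha)\sim h^+_\alpha$ modulo $\Aff(F^+_\alpha)$. Here $h^\pm_\alpha$ are the restrictions of the heights to the vertex sets. The transport $T_{\phi_\alpha}$ depends on the chosen normalization of $\lambda$, since the homogeneous matrix $M$ is determined only up to a positive scalar. Rescaling $\lambda$ multiplies $T_\phi h$ by a positive constant. This does not affect the regular subdivision, nor membership in the linear subspace we use, so we simply fix one representative for each pairing. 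The reverse pairing then uses $T_{\phi^{-1}}$, which is inverse to $T_\phi$ up to this scalar. Hence the condition is symmetric.

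\emph{Step 2: compatible classes induce matching subdivisions.} By Lemma \ref{lemma:restriction}, $\Reg(P_i,h_i)$ restricts to $\Reg(F^-_\alpha,h^-_\alpha)$ on $F^-_\alpha$. By Lemma \ref{lemma:projective-transport}, $\phi_\alpha$ carries this subdivision to $\Reg(F^+_\alpha,T_{\phi_\alpha}h^-_\alpha)$. By projective compatibility, this equals $\Reg(F^+_\alpha,h^+_\alpha)$, which is the restriction of $\Reg(P_j,h_j)$. So the subdivisions agree under every pairing.

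\emph{Step 3: dimension count.} Define
\[
B:\bigoplus_i H_i\to\bigoplus_\alpha Q_\alpha,\qquad B_\alpha([h])=[T_{\phi_\alpha}h^-_\alpha-h^+_\alpha],
\]
with values in $Q_\alpha=\R^{V(F^+_\alpha)}/\Aff(F^+_\alpha)$. By Lemma \ref{lemma:projective-transport}, $T_{\phi_\alpha}$ is linear and preserves restrictions of affine functions, so $B$ is well defined. The Euler characteristic computation of Lemma \ref{lemma:surplus} involves only the combinatorics of the faces, so it carries over verbatim and gives $\dim\ker B\ge\frac12\sum_i(f_i-4)$.

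\emph{Step 4: refinement and termination.} Suppose some $P_i$ is not a tetrahedron, and take a nonzero class in $\ker B$. As in Proposition \ref{proposition:one-refinement}, some $\Reg(P_i,h_i)$ is then nontrivial. The new three-dimensional cells inherit a pairing structure of the same kind:
\begin{itemize}
\item a new face $G$ lying in an old face $F$ is paired with $\phi(G)$ by $\phi|_G$, which is again the restriction of a projective transformation defined on the closed face;
\item an interior face is paired with its twin copy by the identity, which is projective.
\end{itemize}
So the new system satisfies the hypotheses of the theorem. The termination argument of Theorem \ref{theorem:affine-triangulation} is purely affine-combinatorial inside each $P_i$, since it bounds the number of cells by the number of affinely independent $4$-subsets of $V(P_i)$. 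It therefore applies unchanged.

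The main point to check carefully is Step 1. Each $T_{\phi_\alpha}$ must be a fixed linear map, chosen consistently for $\phi_\alpha$ and its inverse, and for restrictions of pairings to subfaces. Under restriction we keep the same $\lambda$ restricted to the subface, so the subface pairing's transport is just the restriction of the original one. The positivity $\lambda>0$ on the closed face is what makes Lemma \ref{lemma:projective-transport} preserve lower faces, and this is exactly the stated hypothesis.
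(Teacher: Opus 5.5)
Your proposal is correct and follows the paper's proof essentially step for step. You impose $[T_{\phi_\alpha}h^-_\alpha-h^+_\alpha]=0$ in $\R^{V(F^+_\alpha)}/\Aff(F^+_\alpha)$, reuse the Euler-characteristic count of Lemma~\ref{lemma:surplus} via Lemma~\ref{lemma:projective-transport}, refine, and terminate by the counting argument of Theorem~\ref{theorem:affine-triangulation}. Your remarks on fixing one positive normalization of $\lambda$ per pair, and on restricting pairings to subfaces, make explicit what the paper leaves implicit. One small imprecision: the kernel of $B$ does depend on that normalization, but since you fix one choice, this is harmless.
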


\begin{proof}
For a paired face $F^-_\alpha\xrightarrow{\phi_\alpha}F^+_\alpha$, impose
\begin{equation}
\left[h^+ -T_{\phi_\alpha}h^-\right]=0
\quad\text{in }\R^{V(F^+_\alpha)}/\Aff(F^+_\alpha).
  \nonumber \end{equation}
By Lemma \ref{lemma:projective-transport}, an $n$-gonal pair gives $n-3$ linear conditions, as in Lemma \ref{lemma:surplus}. Thus a nonzero compatible height class exists whenever some cell is not a tetrahedron. The induced subdivisions agree on paired faces, and new interior faces are paired by the identity. Iterating the construction and using the finiteness argument in Theorem \ref{theorem:affine-triangulation} gives the desired triangulations.
\end{proof}

\subsection{Affine fellows}
For a geodesic partially truncated polyhedron $P$ in the Klein ball, let $\widehat P$ denote its \emph{affine fellow} in the sense of Luo-Schleimer-Tillmann \cite{LST}. It is the convex affine polyhedron obtained by restoring the ideal and hyperideal vertices before truncation: ideal vertices lie on $\partial B^3$, while hyperideal vertices lie outside $\overline{B^3}$. The original polyhedron $P$ is recovered by truncating $\widehat P$ at its hyperideal vertices along the corresponding truncation planes. For the geometry and rigidity of hyperideal polyhedra, see \cite{BB}. For the geometric theory of partially
truncated triangulations and their consistency equations, see Frigerio-Petronio \cite{FP}.

The faces inherited from the untruncated polyhedron are called \emph{lateral faces}, while the new faces created by truncation are \emph{truncation faces}. Hyperbolic face pairings induce projective maps between the corresponding lateral faces of the affine fellows. Luo-Schleimer-Tillmann proved the local correspondence needed here.
\begin{figure}[htbp]
\centering
\includegraphics[scale=0.36]{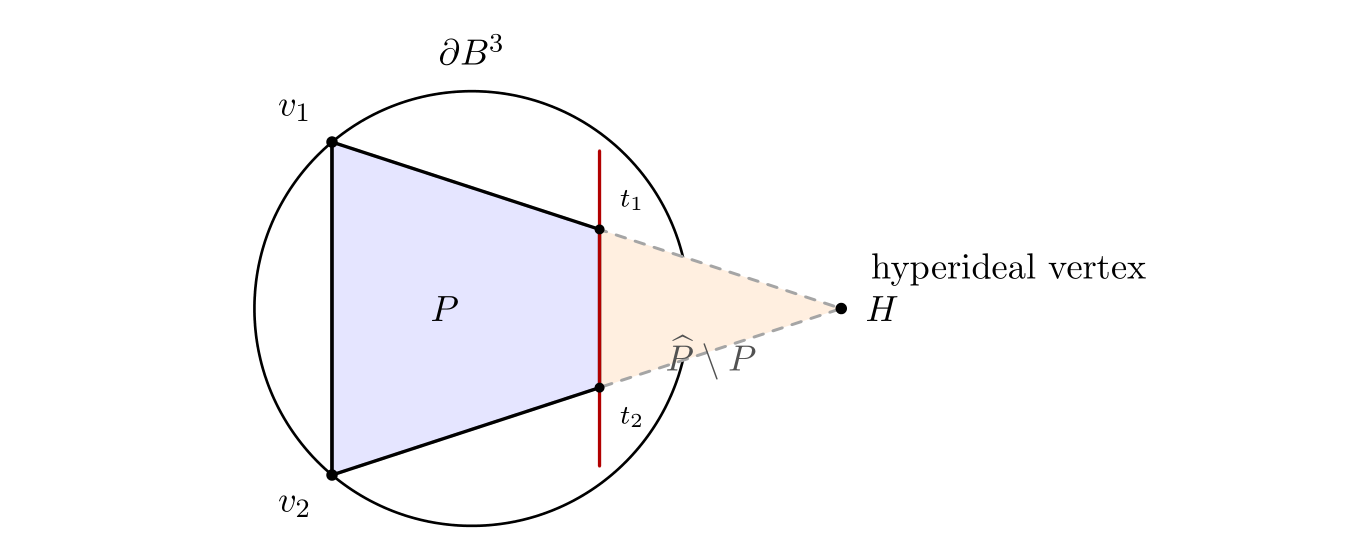}
\caption{
A geodesic partially truncated polyhedron and its affine fellow.
}
\label{fig:affine-fellow}
\end{figure}

\begin{theorem}[Luo-Schleimer-Tillmann \cite{LST}]\label{theorem:LST-fellow}
A subdivision of an affine fellow $\widehat P$ into straight tetrahedra using only its vertices determines a subdivision of the corresponding hyperbolic polyhedron into geodesic partially truncated tetrahedra.
\end{theorem}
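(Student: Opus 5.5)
We need to prove the LST theorem: a subdivision of the affine fellow $\widehat P$ into straight tetrahedra using only its vertices determines a subdivision of $P$ into geodesic partially truncated tetrahedra. The plan is to truncate each tetrahedron of the subdivision at its hyperideal vertices and show that the truncated pieces are genuine partially truncated tetrahedra that tile $P$.

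\emph{Setup.} Let $\widehat P\subset\R^3$ be the affine fellow, with vertices of three kinds: finite (inside $B^3$; in the setting here typically none), ideal (on $\partial B^3$) and hyperideal (outside $\overline{B^3}$). For each hyperideal vertex $v$, let $\Pi_v$ be its polar plane with respect to the quadric $\partial B^3$. The truncation half-space $U_v$ is the component of $\R^3\setminus\Pi_v$ containing $v$. Then $P=\widehat P\setminus\bigcup_v U_v$. The standing hypotheses on a geodesic partially truncated polyhedron will be used:
\begin{itemize}
\item every edge of $\widehat P$ meets $\overline{B^3}$;
\item the truncation regions $U_v\cap\widehat P$ for distinct hyperideal vertices are pairwise disjoint;
\item each $\Pi_v\cap\widehat P$ meets only the edges of $\widehat P$ emanating from $v$.
\end{itemize}

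\emph{Step 1: edges of the subdivision meet the ball.} Let $T=\conv\{w_1,\dots,w_4\}$ be a tetrahedron of the subdivision, and consider a segment $w_aw_b$. If one endpoint is ideal or finite, the segment meets $\overline{B^3}$. If both are hyperideal, I would argue by polarity. The segment $w_aw_b$ avoids $\overline{B^3}$ exactly when the Lorentzian form is positive definite on $\mathrm{span}(\widehat w_a,\widehat w_b)$. In that case the polar line lies inside the ball, and the truncation regions $U_{w_a}$ and $U_{w_b}$ overlap inside $\widehat P$. The overlap happens near the segment, since both polar planes separate the two points from the interior portion. This contradicts the disjointness hypothesis. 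So every edge of $T$ passes through $\overline{B^3}$, and every face and $T$ itself meet $B^3$.

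\emph{Step 2: truncation of $T$ matches truncation of $\widehat P$.} A hyperideal vertex of $T$ is a vertex of $\widehat P$, and it carries the same polar plane. Hence $T\setminus\bigcup_v U_v$ is the truncation of $T$ at its own hyperideal vertices. Step 1 guarantees that each $\Pi_v$ cuts $T$ in a triangle meeting exactly the three edges of $T$ through $v$. The truncation triangles at different vertices of $T$ are disjoint because $U_v\cap\widehat P$ are pairwise disjoint. So the truncated $T$ is a geodesic partially truncated tetrahedron: its faces lie in hyperbolic planes, since every face meets $B^3$, and its truncation faces are orthogonal to the adjacent lateral faces, because they are polar planes.

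\emph{Step 3: the pieces tile $P$.} The tetrahedra $T$ tile $\widehat P$ face-to-face. Intersecting the whole tiling with the common complement $\widehat P\setminus\bigcup U_v$ therefore gives a face-to-face decomposition of $P$. On $\partial P$, truncation faces of the pieces subdivide the truncation faces of $P$, and lateral faces subdivide lateral faces.

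\emph{Main obstacle.} The main obstacle is Step 1: excluding a subdivision edge or face that joins two hyperideal vertices but misses $\overline{B^3}$, since such an edge would produce a degenerate piece. I would first try the polarity argument sketched above. If that fails, I would restrict to subdivisions whose hyperideal vertices are pairwise joined through the ball, which holds for the Kojima decompositions used later because each cell has at most one hyperideal vertex per component of the relevant structure.
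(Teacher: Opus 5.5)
The paper gives no proof of this statement; it quotes it from Luo--Schleimer--Tillmann, so your proposal has to stand on its own. Your strategy is the natural one: truncate each tetrahedron $T$ at its own hyperideal vertices, check that $T$ is a hyperideal tetrahedron, and intersect the tiling with $P$. Steps 2--3 go through once Step 1 is proved in the right form. But Step 1, which you rightly call the crux, has a genuine gap, because it rests on a false claim.

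\textbf{What fails in Step 1.} It is not true that $[v,w]$ avoids $\overline{B^3}$ exactly when the form is positive definite on $\mathrm{span}(v,w)$. Take $v=(1,2,0,0)$ and $w=(1,3,0,0)$. The span contains $3v-2w=(1,0,0,0)$, so it is Lorentzian. Yet the segment misses the closed ball, which lies on the far side of $v$ from $w$. So your dichotomy omits a case, and the ``overlap near the segment'' is asserted rather than shown.

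Moreover, ``meets $\overline{B^3}$'' is the wrong target. A hyperideal--hyperideal edge tangent to the sphere gives a degenerate truncated piece. So does an ideal vertex $u\in\Pi_v$, since then $[u,v]$ is tangent to the sphere at $u$. Your Step 1 excludes neither. For the same reason, the assertion in Step 2 that $\Pi_v$ cuts exactly the three edges of $T$ at $v$ does not follow from Step 1.

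\textbf{The repair} is a short computation. Normalise $v_0=w_0=1$ and put $a=\langle v,v\rangle>0$, $c=\langle w,w\rangle>0$, $b=\langle v,w\rangle$, so that $U_v=\{\langle x,v\rangle>0\}$. Let $x_t=(1-t)v+tw$.
\begin{itemize}
\item $\langle x_t,x_t\rangle<0$ for some $t\in(0,1)$ iff $b<-\sqrt{ac}$.
\item If $b\ge 0$, then $w\in\overline{U_v}$.
\item If $-\sqrt{ac}\le b<0$, then $x_t\in U_v$ iff $t<a/(a-b)$, and $x_t\in U_w$ iff $t>-b/(c-b)$. Since $-b/(c-b)\le a/(a-b)$ iff $b^2\le ac$, the sets $\overline{U_v}$ and $\overline{U_w}$ meet on $[v,w]\subset\widehat P$.
\end{itemize}

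Hence the hypotheses you should use are the closed ones:
\begin{itemize}
\item $\overline{U_v}\cap\overline{U_w}\cap\widehat P=\varnothing$ for $v\neq w$;
\item $\langle u,v\rangle<0$ for every vertex $u\neq v$ of $\widehat P$. This is derivable from your third bullet, but it needs an argument.
\end{itemize}
The first gives $b<-\sqrt{ac}$ for hyperideal pairs, and the second makes every ideal--hyperideal edge enter the open ball. The second also justifies Step 2. It further justifies the tacit claim in Step 3 that $T\cap\overline{U_w}=\varnothing$ when $w\notin T$, since all vertices of $T$ lie in $\{\langle x,w\rangle<0\}$.

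\textbf{The fallback.} Drop it. Restricting the class of subdivisions changes the statement. Its justification also confuses ideal with hyperideal vertices: the ``at most one'' in Theorem 1.2 concerns ideal vertices, and Kojima cells may have several hyperideal vertices (for instance a truncated tetrahedron with four).

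In the intended application, $b<-\sqrt{ac}$ holds for free. Distinct hyperideal vertices of a lifted cell are poles of distinct lifts of the compact geodesic boundary. These lifts are ultraparallel planes, with the universal cover on the negative side of each.
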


The face pairings of the affine fellows extend the original hyperbolic face pairings and are projective homeomorphisms of the closed polygonal faces. In affine coordinates, their denominators are nowhere zero on these faces and can therefore be chosen positive. Thus they satisfy the hypotheses of Theorem \ref{theorem:projective-triangulation}.

\subsection[Proof of Theorem 1.2]{Proof of Theorem \ref{main:boundary}}
Kojima \cite{Kojima} gives a decomposition of $N$ into geodesic partially truncated polyhedra. In the setting of Theorem \ref{main:boundary}, the cells may be chosen so that each has at most one ideal vertex; see also \cite{LST,Frigerio}. Choose one affine fellow $\widehat P_i$ for every cell of the quotient decomposition. The hyperbolic face pairings induce projective transformations between the corresponding lateral faces of the fellows.

Apply Theorem \ref{theorem:projective-triangulation} to the fellows $\widehat P_i$. By Theorem \ref{theorem:LST-fellow}, their compatible triangulations determine a geometric partially truncated triangulation of $N$. No new vertices are introduced, and each Kojima cell has at most one ideal vertex; hence every tetrahedron has at most one ideal vertex. The triangulation is good and realizes the original hyperbolic structure.

\subsection[Proof of Theorem 1.3]{Proof of Theorem \ref{main:mixed}}
Ge-Jia-Zhang \cite{GJZdecomp} give a mixed polyhedral decomposition in which every cell is either ideal or has exactly one hyperideal vertex. Choose affine fellows for the finitely many quotient cells. Their paired lateral faces are related by projective transformations, so Theorem \ref{theorem:projective-triangulation} gives compatible triangulations of all fellows.

Theorem \ref{theorem:LST-fellow} converts these triangulations into partially truncated tetrahedra. A tetrahedron in an ideal cell is ideal. In a cell with one hyperideal vertex, a tetrahedron either avoids that vertex or contains it together with three ideal vertices. Hence every tetrahedron is ideal or of type $1$--$3$, and none is flat. The compatible triangulations of the lateral faces glue to a geometric triangulation realizing the original hyperbolic structure.

\section{Consequences}\label{section:consequences}
The triangulation of Theorem \ref{main} realizes the complete hyperbolic structure and determines the geometric data used below. Assume throughout this section that $M$ is orientable.

\subsection{Angle structures and the Casson-Rivin functional}
Let $\T$ be the geometric ideal triangulation given by Theorem \ref{main}. Suppose that $\T$ has $n$ tetrahedra and that $M$ has $k$ cusps. Denote by $\mathcal A(\T)$ the space of strict angle structures on $\T$. A point of $\mathcal A(\T)$ assigns three numbers in $(0,\pi)$ to the three pairs of opposite edges of every tetrahedron, with sum $\pi$ in each tetrahedron and sum $2\pi$ around every edge.

The dihedral angles give a point $\alpha_M\in\mathcal A(\T)$ by Corollary \ref{corollary:strict-angle}. Thus every cusped finite-volume hyperbolic $3$-manifold admits a strict-angle triangulation without a homological assumption. Hodgson-Rubinstein-Segerman \cite{HRS} obtained such triangulations under an additional homological hypothesis.

For $\theta\in[0,\pi]$, let
\begin{equation}
\Lambda(\theta)=-\int_0^\theta \log|2\sin t|\,dt
  \nonumber \end{equation}
be the Lobachevsky function. If $\alpha=(\alpha_1,\alpha_2,\alpha_3)$ are the angles of an ideal tetrahedron, its volume is
\begin{equation}
\Lambda(\alpha_1)+\Lambda(\alpha_2)+\Lambda(\alpha_3).
  \nonumber \end{equation}

Following the variational approach of Casson and Rivin \cite{FG,Riv}, summing over the tetrahedra defines the Casson-Rivin volume functional
\begin{equation}
\mathcal V:\overline{\mathcal A(\T)}\longrightarrow\R.
  \nonumber \end{equation}
It is continuous on the compact closure and strictly concave on $\mathcal A(\T)$; see \cite{FG}.

\begin{corollary}\label{corollary:casson-rivin}
For the triangulation $\T$ of Theorem \ref{main}, the angle polytope $\mathcal A(\T)$ is nonempty and has dimension $n+k$. Moreover
\begin{equation}\label{equation:volume-max}
\mathcal V(\alpha)\leq \Vol(M)
\qquad\text{for every }\alpha\in\overline{\mathcal A(\T)},
\end{equation}
with equality if and only if $\alpha=\alpha_M$. Hence $\alpha_M$ is the unique global maximizer of the Casson--Rivin functional on this triangulation.
\end{corollary}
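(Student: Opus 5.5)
The plan has three parts: the dimension count, a reduction of the bound to a Lagrange multiplier argument, and a Schläfli-type identification of the maximizer.

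\emph{Dimension.} First I show that $\mathcal A(\T)$ is nonempty and has the expected dimension. Nonemptiness is Corollary \ref{corollary:strict-angle}, since $\alpha_M\in\mathcal A(\T)$. For the dimension I would use the standard linear-algebra count for angle structures on an ideal triangulation with $n$ tetrahedra. There are $3n$ variables, one for each pair of opposite edges. The constraints are $n$ tetrahedron equations and $n$ edge equations, since Euler characteristic zero gives exactly $n$ edges. The rank of this system is $2n-k$. This is the Neumann--Zagier / Kang--Rubinstein computation: the edge equations have exactly $k$ independent relations coming from the cusps, because the sum over the edges incident to a cusp, weighted by incidence, is implied by the tetrahedron equations. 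The solution affine space therefore has dimension $3n-(2n-k)=n+k$. Since $\alpha_M$ is a strict interior point of the positive orthant, the open polytope has full dimension in this affine space.

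\emph{Volume bound.} Here I use concavity. $\mathcal V$ is continuous on the compact set $\overline{\mathcal A(\T)}$ and strictly concave on $\mathcal A(\T)$, so it has a unique maximizer on $\mathcal A(\T)$ if it has a critical point there. I would show that $\alpha_M$ is a critical point. By the Lagrange multiplier computation of Casson--Rivin, $\partial\mathcal V$ along tangent directions of $\mathcal A(\T)$ amounts to the imaginary parts of the logarithmic edge equations and the completeness equations. These are satisfied by $\alpha_M$, because its shapes solve Thurston's gluing and completeness equations by Corollary \ref{corollary:positive-gluing}. Concretely, for a tangent vector $w$, $d\mathcal V(w)=-\sum w_{i}\log|\sin\alpha_i|$. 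This is a linear combination of $\log$-moduli of shape parameters, and it vanishes because the real parts of the log gluing and holonomy equations hold. A strictly concave function with an interior critical point attains its unique maximum there over the convex set $\mathcal A(\T)$. By continuity the same holds over the closure, since concavity extends to $\overline{\mathcal A(\T)}$ and the maximum of a concave function is attained on a convex set containing its interior critical point. The value at this maximum is $\mathcal V(\alpha_M)$, the sum of the volumes of the geometric ideal tetrahedra. Because $\T$ geometrically subdivides the Epstein--Penner cells, this sum equals $\Vol(M)$.

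\emph{Main obstacle.} The hardest step is uniqueness on the boundary $\overline{\mathcal A(\T)}\setminus\mathcal A(\T)$: a boundary point could a priori also achieve $\Vol(M)$. I would handle it by concavity along segments. If $\beta$ is a boundary point with $\mathcal V(\beta)=\Vol(M)$, then $\mathcal V$ restricted to the segment $[\alpha_M,\beta]$ is concave with equal endpoint maxima, hence constant on the segment. But the open part of the segment lies in $\mathcal A(\T)$, where $\mathcal V$ is strictly concave, which is a contradiction unless $\beta=\alpha_M$. I would also verify carefully that the critical point condition exactly matches the real parts of the gluing and cusp equations, including the $k$ completeness directions. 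This is what makes the tangent space count of $n+k$ consistent.
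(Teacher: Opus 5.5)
Your proposal is correct and follows the same route as the paper's proof: nonemptiness from Corollary~\ref{corollary:strict-angle}, the standard Neumann--Zagier rank count giving $\dim\mathcal A(\T)=n+k$, the Casson--Rivin identification of $\alpha_M$ as an interior critical point with $\mathcal V(\alpha_M)=\Vol(M)$, and strict concavity for the bound and its uniqueness; your segment argument just spells out the boundary case that the paper leaves to the citation. One small slip: the derivative of $\mathcal V$ along $\mathcal A(\T)$ recovers the real parts (log-moduli) of the logarithmic gluing and completeness equations, not the imaginary parts as you first wrote, since the imaginary parts are the angle equations themselves.
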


\begin{proof}
The non-emptiness follows from Corollary \ref{corollary:strict-angle}, while the dimension formula $\dim\mathcal A(\T)=n+k$ is the standard rank computation for the angle equations; see \cite{FG}. Since $\alpha_M$ comes from the complete hyperbolic metric, the Casson--Rivin theorem identifies it with an interior critical point of $\mathcal V$; see \cite{FG}. The volume at this point is the sum of the volumes of the geometric tetrahedra, hence
\begin{equation}
\mathcal V(\alpha_M)=\Vol(M).
  \nonumber \end{equation}
The strict concavity and the volume-maximization theorem of Casson and Rivin then give \eqref{equation:volume-max}, including the equality statement; see \cite{FG,Riv}.
\end{proof}
\begin{remark}
Corollary \ref{corollary:casson-rivin} applies to the triangulation constructed in Theorem \ref{main}. It does not establish Casson's conjectured volume inequality for arbitrary angled ideal triangulations of $M$.
\end{remark}

\subsection{Shape parameters, deformations and the Bloch invariant}
Let $z_1,\ldots,z_n\in\mathbb C$ be the shapes of the tetrahedra of $\T$, with the orientation chosen as in Theorem \ref{main}. Then
\begin{equation}
\operatorname{Im}z_j>0
\qquad\text{and}\qquad
\Vol(M)=\sum_{j=1}^n D(z_j),
  \nonumber \end{equation}
where $D$ is the Bloch--Wigner dilogarithm. Corollary \ref{corollary:positive-gluing} says in addition that these shapes satisfy both the edge equations and the peripheral completeness equations.

Let $\mathcal G(\T)$ denote the variety defined near $z=(z_1,\ldots,z_n)$ by the edge equations alone. The local deformation theory of Thurston and Neumann--Zagier applies at this complete positive solution.

\begin{corollary}\label{corollary:NZ}
If $M$ has $k$ cusps, the complete shape vector $z$ is a nonsingular point of $\mathcal G(\T)$, and $\mathcal G(\T)$ has complex dimension $k$ near $z$. The logarithmic holonomies of one nontrivial peripheral curve on each cusp give local holomorphic coordinates. In particular, the usual hyperbolic Dehn filling deformation theory can be carried out starting from a solution for which every tetrahedron has positive imaginary part.
\end{corollary}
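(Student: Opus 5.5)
The plan is to apply the Neumann--Zagier deformation theory at the complete positive solution supplied by Corollary \ref{corollary:positive-gluing}.

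First, set up the relevant map. Work in logarithmic shape coordinates $w_j=\log z_j$, on the principal branch, which is well defined because $\operatorname{Im}z_j>0$. The edge equations are $n$ equations $\sum_j (a_{ej}\log z_j+b_{ej}\log(1-z_j))=2\pi i$. Let $\T$ have $n$ tetrahedra, hence $n$ edges, since Euler characteristic zero forces the number of edges to equal the number of tetrahedra. Consider the map $E:(\mathbb C\setminus\{0,1\})^n\to\mathbb C^n$ given by the left-hand sides.

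Next, invoke the Neumann--Zagier rank theorem. It says that the $n$ edge equations have exactly $k$ redundancies: the $n\times 2n$ matrix of coefficients $(A\mid B)$ has rank $n-k$. The standard argument uses the symplectic properties of the gluing matrix and involves only the combinatorics of $\T$, so it applies to our triangulation.

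Then prove nonsingularity at $z$. The Jacobian of $E$ at $z$ is $A\,\mathrm{diag}(1/z_j)+B\,\mathrm{diag}(-1/(1-z_j))$. Following Neumann--Zagier, I would show that this Jacobian has rank exactly $n-k$ at any point where all $\operatorname{Im}z_j>0$. The key input is the positivity argument: a vector in the kernel of the completed system, once the $k$ peripheral holonomy derivatives are adjoined, would give an infinitesimal deformation preserving completeness. By Mostow--Prasad rigidity, together with the fact that $z$ realizes the hyperbolic structure, such a deformation must vanish. Equivalently, one uses the nondegeneracy of the Hessian of the volume on the angle space, namely the strict concavity recorded in Corollary \ref{corollary:casson-rivin}.

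With this rank in hand, the conclusions follow in order. The implicit function theorem gives that $\mathcal G(\T)$ is a complex manifold of dimension $2n-\ldots$; precisely, its dimension is $n-(n-k)=k$ near $z$, and $z$ is a smooth point. The same rank computation shows that the map to the $k$ logarithmic holonomies $u_1,\ldots,u_k$ of chosen peripheral curves has invertible differential at $z$. This is Thurston's and Neumann--Zagier's statement that $u=0$ exactly at the complete structure and that $u$ gives local coordinates. Finally, since $\operatorname{Im}z_j>0$ is an open condition, nearby solutions remain positively oriented, which yields the Dehn filling remark.

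The main obstacle is the rank and injectivity step at $z$. I would cite Neumann--Zagier and Choi, whose results hold for any ideal triangulation admitting a positively oriented complete solution. Corollary \ref{corollary:positive-gluing} provides exactly that hypothesis, so the remaining work is only to check that their proofs need no further assumptions, such as the absence of vertex self-identifications.
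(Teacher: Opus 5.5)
Your proposal follows the paper's proof, with more scaffolding: Corollary \ref{corollary:positive-gluing} supplies a complete solution with every $\operatorname{Im}z_j>0$, and the smoothness, the dimension $k$ and the holonomy coordinates are then quoted from Choi and Neumann--Zagier, as the paper does. One caution about your sketch of the rank step: Mostow--Prasad rigidity is a global statement and does not by itself rule out first-order deformations (that route needs the infinitesimal rigidity of Weil and Garland plus Choi's comparison with the character variety, and it works only at the complete point), whereas your claim at every point with $\operatorname{Im}z_j>0$ follows directly from the Neumann--Zagier symplectic relation: for the completed matrix $(A\mid B)$ ($n-k$ independent edge rows and one peripheral row per cusp) $AB^T$ is real symmetric, the Jacobian is $(A+BD)\operatorname{diag}(1/z_j)$ with $D=\operatorname{diag}(d_j)$, $d_j=z_j/(z_j-1)$, $\operatorname{Im}d_j<0$, and $x^T(A+BD)=0$ gives $\sum_j\operatorname{Im}(d_j)\,|(B^Tx)_j|^2=\operatorname{Im}(-x^TAB^T\bar x)=0$, hence $B^Tx=0$, $x^TA=0$ and $x=0$ by full rank.
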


\begin{proof}
By Corollary \ref{corollary:positive-gluing}, the triangulation admits a complete positively oriented
solution of the gluing equations. The smoothness and dimension assertions, as well as the local parametrization by peripheral holonomies, follow
from Choi's deformation theorem \cite{Choi} (or see Thurston \cite{Thurston} and Neumann-Zagier \cite{NZ}).
\end{proof}

The same shape parameters represent the Bloch invariant. Neumann-Yang \cite{NY} associate to a finite-volume hyperbolic $3$-manifold its Bloch invariant $\beta(M)\in\mathcal B(\mathbb C)$, represented by the simplex parameters of a degree-one ideal triangulation, with signs determined by orientation. Since every tetrahedron of $\T$ is positively oriented, the triangulation of Theorem \ref{main} gives the particularly simple representative
\begin{equation}\label{equation:bloch-positive}
\beta(M)=\sum_{j=1}^n [z_j],
\qquad \operatorname{Im}z_j>0.
\end{equation}
The Bloch-Wigner regulator of \eqref{equation:bloch-positive} is $\Vol(M)$. After compatible flattenings are chosen, the extended Bloch-group formalism of Neumann \cite{NeumannExtended} gives the corresponding complex-volume and Chern-Simons classes from the same triangulation.

\subsection{Efficiency, the $3$D index and quantum gluing data}
A strict angle structure implies strong restrictions on normal surfaces and also supplies an index structure.

\begin{corollary}\label{corollary:efficiency-index}
The triangulation $\T$ of Theorem \ref{main} is strongly $1$-efficient. In particular, it has no spun normal surface of Euler characteristic zero. Moreover $\T$ admits an index structure and its $3$D index is well-defined.
\end{corollary}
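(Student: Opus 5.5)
The corollary follows by combining Corollary \ref{corollary:strict-angle} with the standard consequences of strict angle structures in the literature, so the plan is a chain of citations, each checked against its hypotheses. The starting point is the strict angle structure $\alpha_M\in\mathcal A(\T)$ given by the dihedral angles of the geometric triangulation $\T$ of Theorem \ref{main}. Since $\T$ is an ideal triangulation of a cusped orientable $M$ and $\mathcal A(\T)\neq\varnothing$, the hypotheses of the Kang--Rubinstein and Hodgson--Rubinstein--Segerman results on angle structures are met.

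The first step is \emph{strong $1$-efficiency}. I would invoke the Kang--Rubinstein theorem, in the form stated by Hodgson--Rubinstein--Segerman. It says that if an ideal triangulation admits a strict angle structure, then every closed normal surface, and more generally every spun normal surface, has a well-defined combinatorial area. This area is positive except for vertex-linking tori. The Gauss--Bonnet type identity $\mathrm{area}(S)=-2\pi\chi(S)$ then forces every closed normal surface and every spun normal surface to have $\chi<0$, apart from peripheral (vertex-linking) tori. This is precisely strong $1$-efficiency, and the statement that no spun normal surface has Euler characteristic zero is the spun case of the same identity.

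The second step is the \emph{index structure}. Here I would use the criterion of Garoufalidis--Hodgson--Rubinstein--Segerman: $\T$ admits an index structure if and only if it is $1$-efficient. By the first step, $\T$ is $1$-efficient, so the criterion applies. Alternatively, one can use directly the classical result of Dimofte--Gaiotto--Gukov, which says that a strict angle structure suffices for convergence of the $3$D index sums. In either form, the $3$D index of $\T$ is then well defined as a formal Laurent series in $q^{1/2}$.

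The main point requiring care is a bookkeeping issue rather than new mathematics. The cited theorems are stated for orientable ideal triangulations with at least one cusp, and the triangulation of Theorem \ref{main} may identify vertices. I would check that the normal surface theory and the Gauss--Bonnet identity only require the ideal triangulation to be a genuine face-pairing of tetrahedra with vertex links the cusp tori. That holds here, because the links are the Euclidean cusp sections already used in Corollary \ref{corollary:positive-gluing}. I would also check that the angle structure is strict, meaning all angles lie in $(0,\pi)$. This follows because every tetrahedron is nondegenerate, as shown in the proof of Theorem \ref{main}.
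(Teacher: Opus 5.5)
Your proposal is correct and follows essentially the same route as the paper: take the strict angle structure from Corollary~\ref{corollary:strict-angle}, use Kang--Rubinstein for strong $1$-efficiency and for the absence of spun normal surfaces with Euler characteristic zero, and then use Garoufalidis's notion of index structure or the GHRS $1$-efficiency criterion to obtain a well-defined $3$D index. The only difference is that you spell out the combinatorial-area and Gauss--Bonnet argument behind the Kang--Rubinstein result, which the paper simply cites.
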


\begin{proof}
By Corollary \ref{corollary:strict-angle}, $\T$ carries an angle structure. Kang--Rubinstein \cite[Theorem 2.5]{KR} show that an ideal triangulation carrying an angle structure is strongly $1$-efficient, and \cite[Theorem 2.8]{KR} excludes spun normal surfaces of Euler characteristic zero. A strict angle structure is, in particular, an index structure in the sense of Garoufalidis \cite{Garoufalidis}; equivalently, one may use the characterization of index structures by $1$-efficiency in \cite{GHRS}. Hence the $3$D index of $\T$ is defined.
\end{proof}

The Neumann-Zagier data of $\T$ also enter quantum constructions based on the gluing equations. Dimofte-Garoufalidis \cite{DG} associate perturbative invariants to a Neumann-Zagier datum coming from a regular ideal triangulation; Theorem \ref{main} gives such a datum for the complete representation with all shapes in the upper half-plane. Garoufalidis-Kashaev \cite{GK} associate a meromorphic state integral to an ideal triangulation, and when a strict angle structure exists its Laurent expansion has coefficients given by the $3$D index. Thus the gluing-equation, Casson-Rivin and $3$D-index constructions can all be carried out on the same geometric triangulation.

\subsection{Manifolds with geodesic boundary}
The dihedral angles of the triangulations in Theorems \ref{main:boundary} and \ref{main:mixed} satisfy the corresponding generalized angle equations.

\begin{corollary}\label{corollary:generalized-angle}
The triangulations in Theorems \ref{main:boundary} and \ref{main:mixed} carry positive generalized angle structures given by their dihedral angles. At every ideal vertex the incident angles sum to $\pi$, at every hyperideal vertex they sum to a number strictly smaller than $\pi$, and around every internal edge the angles sum to $2\pi$.
\end{corollary}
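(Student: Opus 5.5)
The plan is to show that the dihedral angles of each geometric partially truncated tetrahedron are positive, and then to verify the three families of equations: the vertex conditions one tetrahedron at a time, and the edge condition from the way the tetrahedra are glued. Positivity holds because every tetrahedron produced by Theorems \ref{main:boundary} and \ref{main:mixed} is a nondegenerate geodesic partially truncated tetrahedron, by Theorem \ref{theorem:LST-fellow}. Nondegeneracy follows as in the proof of Theorem \ref{main}: the four vertices of the affine fellow tetrahedron are affinely independent. So each internal dihedral angle along a lateral edge lies in $(0,\pi)$. I take the generalized angle structure to be the assignment of these six angles to the edges of each tetrahedron.

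For the vertex conditions I will look at a single tetrahedron near one of its vertices. At an ideal vertex, a small horosphere centred there meets the tetrahedron in a Euclidean triangle whose angles are the three dihedral angles at the incident edges. Hence their sum is $\pi$.

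At a hyperideal vertex $v$, the truncation plane is the hyperbolic plane polar to $v$. It meets the three lateral faces through $v$ orthogonally and cuts out a compact hyperbolic triangle. Because of the orthogonality, the angles of this triangle equal the dihedral angles of the tetrahedron along the three edges through $v$. Gauss--Bonnet then gives an angle sum of $\pi-\operatorname{Area}<\pi$, and the area is positive since the triangle is nondegenerate. This is the standard truncated-vertex computation of Frigerio--Petronio \cite{FP}, which I would cite instead of repeating.

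For the edge condition, take an internal edge $e$, meaning a lateral edge not contained in the geodesic boundary. Lift the triangulation to the universal cover, which is a convex region of $\H^3$ bounded by geodesic planes. The tetrahedra incident to a lift of $e$ fit together face-to-face around it: the triangulations agree on paired faces by Theorem \ref{theorem:projective-triangulation}, and, by Theorem \ref{theorem:LST-fellow}, these are genuine geodesic subdivisions of the Kojima or Ge--Jia--Zhang cells, which themselves tile the universal cover. A point in the interior of $e$ is an interior point of the manifold, so a small ball around it is covered exactly once by the incident tetrahedra, and their dihedral angles at $e$ add up to $2\pi$.

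The step that needs care is the edge condition. One must make sure that an edge of the triangulation which is interior to a cell, or interior to a face of a cell, is handled by the same local-covering argument. Such an edge has a total angle of $2\pi$, or of $\pi+\pi$, coming from inside the cell or from the two cells across the face. One must also make sure that no internal edge lies on a truncation face. Both points follow because no vertices are added and truncation faces contain no lateral edges.
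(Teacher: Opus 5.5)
Your proposal is correct and takes the same route as the paper. The vertex relations come from the Euclidean link (the horospherical triangle) at an ideal vertex and the hyperbolic link (the truncation triangle, via Gauss--Bonnet) at a hyperideal vertex, and the $2\pi$ edge relation comes from the incident tetrahedra filling a small transverse neighbourhood of an internal edge. You add detail the paper leaves implicit, though affine independence of the fellow vertices is automatic for a three-dimensional cell, and the real nondegeneracy input is Theorem \ref{theorem:LST-fellow}.
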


\begin{proof}
The link of an ideal vertex is Euclidean and the link of a hyperideal vertex is hyperbolic. The vertex relations follow from the angle sums in these links. Around an internal edge, a small transverse disk is divided into sectors whose angles are the dihedral angles of the incident tetrahedra, so their sum is $2\pi$.
\end{proof}

For the mixed manifolds of Ge-Jia-Zhang these angles come directly from a geometric subdivision and no flat tetrahedra occur. The angle-structure construction in \cite{GJZangle} uses additional topological hypotheses.

The good triangulations of Theorem \ref{main:boundary} also remove the existence assumption from Frigerio's triangulation-based deformation theory.

\begin{corollary}\label{corollary:frigerio-deformation}
Let $N$ satisfy the hypotheses of Theorem \ref{main:boundary} and suppose that $N$ has $k$ toric cusps. There is a good geometric partially truncated triangulation of $N$ for which the space of solutions to the consistency equations is a smooth real $2k$-manifold near the point representing the complete structure.
\end{corollary}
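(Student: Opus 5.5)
The plan is to take the good geometric partially truncated triangulation $\T$ of $N$ produced in Theorem \ref{main:boundary} and apply Frigerio's deformation theory for partially truncated triangulations to it. Frigerio's results assume that such a triangulation exists; Theorem \ref{main:boundary} supplies that input. First, the dihedral angles (equivalently the edge lengths/moduli) of the geometric tetrahedra of $\T$ give a solution $s_0$ of the consistency equations of Frigerio--Petronio \cite{FP}. This solution represents the complete structure. Indeed, the tetrahedra subdivide the Kojima cells geometrically and glue face-to-face. So the edge equations hold, by the argument of Corollary \ref{corollary:generalized-angle}. The completeness condition at each toric cusp also holds, because the horospherical cusp sections are Euclidean and tiled by Euclidean triangles.

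Next, I would set up the local deformation space. Each good partially truncated tetrahedron has at most one ideal vertex. Its shapes are therefore parametrized by real dihedral-angle data, with one Euclidean vertex-link constraint at the ideal vertex if present. The consistency equations are smooth real equations in these parameters, one for each edge, in the framework of \cite{Frigerio,FP}. The claim then follows from the implicit function theorem, provided the differential of the consistency map at $s_0$ has the expected rank. The expected rank is the number of variables minus $2k$.

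The rank computation is the main obstacle. I would try first to quote Frigerio's theorem \cite{Frigerio}. That theorem states: if a good geometric partially truncated triangulation realizing the complete structure exists, then near that point the solution space is a smooth $2k$-manifold, parametrized by the cusp holonomies (two real parameters per toric cusp, namely the complex log-holonomy of one peripheral curve). Its proof is a Thurston/Neumann--Zagier-type argument, with infinitesimal rigidity of the compact geodesic boundary replacing the cusp analysis at hyperideal vertices. If that statement must be reverified, I would follow the same strategy as Corollary \ref{corollary:NZ}. That strategy has three parts. First, count variables against equations using an Euler characteristic computation for the triangulated $N$. Second, show the relations among the edge equations are exactly those coming from the vertex links: hyperideal links are hyperbolic surfaces and contribute no degeneracy, while each torus cusp contributes one. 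Third, use Calabi--Weil/Garland-type local rigidity of hyperbolic manifolds with geodesic boundary relative to the cusps to show the kernel of the differential is exactly $2k$-dimensional, detected by the peripheral holonomies.

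Combining these steps, the solution space near $s_0$ is a smooth real $2k$-manifold, which is the statement of Corollary \ref{corollary:frigerio-deformation}.
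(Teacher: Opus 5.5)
Your proposal is correct and follows the paper's proof, which consists exactly of applying Frigerio's local deformation theorem \cite{Frigerio} to the good geometric partially truncated triangulation supplied by Theorem \ref{main:boundary}. Your additional check that this triangulation gives a solution of the consistency equations representing the complete structure is a harmless elaboration, and the fallback rank computation is unnecessary once Frigerio's theorem is quoted.
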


\begin{proof}
Apply Frigerio's local deformation theorem \cite{Frigerio} to the good geometric triangulation of Theorem \ref{main:boundary}.
\end{proof}

Related variational results are known for ideal and hyperideal polyhedral metrics. Luo-Yang \cite{LuoYang} developed volume and rigidity methods for ideal and hyperideal polyhedral metrics, and Feng-Ge-Liu \cite{KGL} proved a Casson-Rivin type volume-optimization theorem for triangulations made of $1$-$3$ tetrahedra, namely tetrahedra with one hyperideal and three ideal vertices. The mixed triangulations of Theorem \ref{main:mixed} may contain both ideal and $1$-$3$ tetrahedra, so those results do not apply directly to the whole mixed triangulation.

\noindent Huabin Ge, hbge@ruc.edu.cn\\[2pt]
\emph{School of Mathematics, Renmin University of China, Beijing 100872, P. R. China}\\[2pt]

\end{document}